\numberwithin{equation}{section}
\newcommand{\R}{\mathbb{R}}
\newcommand{\T}{\mathbb{T}}
\numberwithin{equation}{section} 
\newcommand{\lv}{\underline{v}}
\newcommand{\uv}{\overline{v}}
\newtheorem{theorem}{Theorem}[section]
\newtheorem{lemma}[theorem]{Lemma}
\newtheorem{proposition}[theorem]{Proposition}
\newtheorem{remark}[theorem]{Remark}
\newtheorem{claim}[theorem]{Claim}
\begin{document}

\title{linear inviscid damping in Gevrey spaces}


\author{Hao Jia}
\address{University of Minnesota}
\email{jia@umn.edu}


\begin{abstract}
{\small}
We prove linear inviscid damping near a general class of monotone shear flows in a finite channel, in Gevrey spaces. It is an essential step towards proving nonlinear inviscid damping for general shear flows that are not close to the Couette flow, which is a major open problem in 2d Euler equations. 
\end{abstract}

\maketitle

\setcounter{tocdepth}{1}


\section{Introduction}
\subsection{Main equations} 
Consider the two dimensional Euler equation linearized around a shear flow $(b(y),0)$, in the periodic channel $ (x,y)\in \mathbb{T}\times[0,1]$:
\begin{equation}\label{Main1}
\begin{split}
&\partial_t\omega+b(y)\partial_x\omega-b''(y)u^y=0,\\
&{\rm div}\,u=0\qquad{\rm and}\qquad \omega=-\partial_yu^x+\partial_xu^y,
\end{split}
\end{equation}
with the natural non-penetration boundary condition $u^y|_{y=0,1}=0$. 

For the linearized flow, 
$\int\limits_{\mathbb{T}\times[0,\,1]}u^x(x,y,t)\,dxdy$ and $ \int\limits_{\mathbb{T}\times[0,\,1]}\omega(x,y,t)\,dxdy$
are conserved quantities. In this paper, we will assume that
 $$\int_{\mathbb{T}\times[0,1]}u_0^x(x,y)\,dxdy=\int_{\mathbb{T}\times[0,1]}\omega_0\,dx dy=0.$$
 These assumptions can be dropped by adjusting $b(y)$ with a linear shear flow $C_0y+C_1$.
 Then one can see from the divergence free condition on $u$ that 
there exists a stream function $\psi(t,x,y)$ with $\psi(t,x,0)=\psi(t,x,1)\equiv 0$, such that 
\begin{equation}\label{eqS1}
u^x=-\partial_y\psi,\,\,u^y=\partial_x\psi.
\end{equation}
The stream function $\psi$ can be solved through
\begin{equation}\label{eq:equationStream}
\Delta\psi=\omega, \qquad \psi|_{y=0,1}=0.
\end{equation}
We summarize our equations as follows
\begin{equation}\label{main}
\left\{\begin{array}{ll}
\partial_t\omega+b(y)\partial_x\omega-b''(y)\partial_x\psi=0,&\\
\Delta \psi(t,x,y)=\omega(t,x,y),\qquad \psi(t,x,0)=\psi(t,x,1)=0,&\\
(u^x,u^y)=(-\partial_y\psi,\partial_x\psi),&
\end{array}\right.
\end{equation}
for $ t\ge0, (x,y)\in\mathbb{T}\times[0,1]$. 

Our goal is to understand the long time behavior of $\omega(t)$ in Gevrey spaces as $t\to\infty$, with Gevrey initial $\omega_0$. 

\subsection{The linear inviscid damping}

Hydrodynamical stability is a classical topic in mathematical analysis of fluid flows, pioneered by prominent figures such as Rayleigh \cite{Rayleigh}, Kelvin \cite{Kelvin}, Orr \cite{Orr}, among many others. The main focus was to study stability of important physically relevant flows, such as shear flows and vortices.

 In this paper we consider shear flows. There are extensive works on the linear stability property of these flows. In particular, Rayleigh \cite{Rayleigh} proved that shear flows with no inflection points are spectrally stable. Orr \cite{Orr} in 1907 observed the $t^{-1}$ decay rate of the velocity when the shear flow is Couette (linear shear), and Case \cite{Case} provided a formal proof in the case of a finite channel. See also Lin and Zeng \cite{ZhiWu} for a sharp version with optimal dependence on the regularity of the initial data. 
 
 The observation of Orr can be described roughly as follows. Consider the linearized equation near Couette flow:
$$\partial_t\omega+y\partial_x\omega=0, \,\,(x,y)\in\mathbb{T}\times \R.$$

One can solve this equation explicitly and it follows that
$\omega(t,x,y)=\omega_0(x-yt,y).$
The equation for the stream function becomes
$\Delta\psi(t,x,y)=\omega(t,x,y)=\omega_0(x-yt,y)$
for $(x,y)\in \mathbb{T}\times \R$ and therefore
\begin{equation}\label{decaycostderivative}
\widetilde{\psi}(t,k,\xi)=-\frac{\widetilde{\omega}(t,k,\xi)}{k^2+|\xi|^2}=-\frac{\widetilde{\omega_0}(k,\xi+kt)}{k^2+|\xi|^2}.
\end{equation}
In the above, $\widetilde{h}$ denotes the Fourier transform of $h$ in $x,\,y$.
Assume that $\omega_0$ is smooth, so $\widetilde{\omega}_0(k,\xi)$ decays fast in $k,\,\xi$. Then we can view $\xi$ as
$$\xi=-kt+O(1),$$ 
and hence $\widetilde{\psi}(t,k,\xi)$ decays like $|k|^{-2}\langle t\rangle^{-2}$ for each $k\neq 0$. Similarly, using the relations $u^x=-\partial_y\psi$ and $u^y=\partial_x\psi,$
we conclude that $\widetilde{u^x}$ decays like $|k|^{-1}\langle t\rangle^{-1}$ and $\widetilde{u^y}$ decays like $|k|^{-1}\langle t\rangle^{-2}$ for all $k\neq0$. Hence, the velocity field decays to another shear flow $(u_{\infty}(y),0)$. 


 For general monotone shear flows, the linearized operator becomes more complicated due to the extra term $b''(y)\partial_x\psi$, see \eqref{main}, which can not be treated as perturbations. Therefore spectral analysis of the linearized operator is required to understand the dynamical properties of the associated flow. For results on the general spectral property of the linearized operator, we refer to Faddeev \cite{Faddeev} and Lin \cite{Lin}. In the direction of inviscid damping, Stepin \cite{Stepin} proved $t^{-\nu}$ decay of the stream function associated with the continuous spectrum, Rosencrans and Sattinger \cite{Rosencrans} proved $t^{-1}$ decay for analytic monotone shear flows. 
 
 Recently, inspired by the remarkable work of Bedrossian and Masmoudi \cite{BeMa} on the nonlinear asymptotic stability of shear flows close to the Couette flow in $\mathbb{T}\times\R$ (see also an extension \cite{IOJI} to $\mathbb{T}\times[0,1]$), optimal decay estimates for the linear problem received much attention, see e.g.  Zillinger \cite{Zillinger1,Zillinger2} and references therein for shear flows close to Couette.  In an important work, Wei, Zhang and Zhao \cite{dongyi} obtained the optimal decay estimates for the linearized problem around monotone shear flows, under very general conditions. In \cite{JiaL} the author identified the main term in the asymptotics of the stream function. From the perspective of the linearized problem, the works \cite{JiaL,dongyi} provided a quite satisfactory picture for the linear inviscid damping problem, in Sobolev spaces.

We also refer the reader to important developments for the linear inviscid damping in the case of non-monotone shear flows \cite{Bouchet,Dongyi2, Dongyi3} and circular flows \cite{Bed2,Zillinger3}. See also Grenier et al \cite{Grenier} for an approach using methods from the study of Schr\"odinger operators. 

\subsection{Nonlinear inviscid damping and Gevrey regularity}
The nonlinear asymptotic stability of shear flows is much more subtle and challenging. So far, the only nonlinear asymptotic stability results are \cite{BeMa} by  Bedrossian and Masmoudi for plane Couette flows, and the extension by Ionescu and the author \cite{IOJI} to a finite channel (thus considering finite energy solutions and boundary effects) still for Couette flows. 

One of the main difficulties in proving nonlinear stability is the presence of ``resonances" in the nonlinearity which can accumulate over time.  To control the resonances, very high (in fact Gevrey) regularity of the initial data is required. A key original idea, introduced in Bedrossian and Masmoudi \cite{BeMa}, was to use time-dependent energy functionals associated with  \emph{imbalanced} weights to  control the vorticity in suitable nonlinearly adapted coordinates. The energy functionals are carefully designed and lose derivatives in specific ways to balance the resonances. The total loss of  regularity over time $t\in[0,\infty)$ in this procedure is Gevrey-2, and thus one needs to work with at least Gevrey-2 regular functions to maintain meaningful control over the final profile at time $\infty$.  We refer to \cite{BeMa} and \cite{IOJI} for detailed discussions on the nonlinear problem. 

It is clear on the technical level that Gevrey space regularity is necessary for the proofs in \cite{BeMa} and \cite{IOJI}. However, the requirement of Gevrey regular initial data is not just technical. In a recent work, Deng and Masmoudi \cite{Deng} demonstrated that the inviscid damping (with the precise control as in \cite{BeMa} and \cite{IOJI}) does not hold with initial data which is only logarithmically rougher than Gevrey-2. In low Sobolev spaces we have more definitive counterexamples to inviscid damping, see \cite{ZhiWu}.  The celebrated work of Mouhot and Villani \cite{Villani} on Landau damping, where decay also comes from mixing,  requires similar Gevrey regularity on the initial data.  

Therefore, to prove nonlinear inviscid damping, the linear stability analysis needs to be performed in Gevrey spaces, and the methods for proving such Gevrey estimates need to be flexible enough so that one can work with the specific weights used in the nonlinear analysis. 

For the Couette case, the linearized problem can be explicitly solved and it is not an issue to work in Gevrey spaces. In the case of more general monotone shear flows though, this is not the case and it has been an important open problem to study linear inviscid damping in Gevrey spaces. Linear inviscid damping in high regularity spaces has been studied in other contexts, see e.g. Bedrossian-Coti Zelati-Vicol \cite{Bed2} for the 2D vortices, where significant efforts were devoted to study scattering in high Sobolev spaces and the need to work in Gevrey spaces was commented on.

\vspace{-0.1cm}
\subsection{The main results}
In this paper, we prove linear inviscid damping and scattering of the vorticity in Gevrey spaces for a general class of monotone shear flows which need not be close to the Couette flow. As far as we know, this is the first result of linear inviscid damping in Gevrey spaces for general shear flows. In addition, our method is flexible enough and we can work with the specific weights in \cite{IOJI}.  Those weights are refined versions of the weights introduced in \cite{BeMa} and have the necessary smoothness to implement the ideas here. 
We believe the techniques introduced in this paper will play a crucial role in establishing nonlinear inviscid damping near general monotone shear flows. 

We now describe more precisely the main assumptions and our main conclusion. The main conditions we shall assume on the shear flow $b(y)\in C^{\infty}([0,1])$ are:\\

\hspace{0.1in}(1)  For some $\vartheta_0\in(0,1/10)$, 
\begin{equation}\label{B}
\vartheta_0/100\leq b'(y)\leq 1/(100 \vartheta_0)\qquad {\rm and}\qquad b''(y)\equiv 0\,\,{\rm for}\,\,y\in[0,\vartheta_0]\cup[1-\vartheta_0,1],
\end{equation}
\qquad \,\,\,and for some $s\in(0,1)$,
\begin{equation}\label{B1}
\|b\|^2_{L^{\infty}(0,1)}+\int_{\R}e^{2\vartheta_0\langle \xi\rangle^{(s+1)/2}}\left|\widetilde{\,b''\,}(\xi)\right|^2d\xi<1/\vartheta_0.
\end{equation}

\begin{equation}\label{Sp} {\rm (2)\,\, The\,\, linearized\,\, operator}\,\, \omega\to b(y)\partial_x\omega-b''(y)\psi\,\,{\rm has\,\, no\,\, embedded\,\, eigenvalues}.\end{equation}\\
In \eqref{B1} and the rest of the paper we use $\widetilde{\,\,\,\,\,\,\,}$ to denote Fourier transform in $\R$ or $\mathbb{T}\times\R$. We assume that $b'>0$ for the sake of concreteness. The other case $b'<0$ can be treated completely analogously, with a change of time $t\to-t$. Throughout the paper we fix $s\in(0,1)$ from \eqref{B1}.

Our main result is the following theorem.
\begin{theorem}\label{thm}
 Suppose that $\omega_0\in C^{\infty}(\mathbb{T}\times [0,1])$ satisfies ${\rm supp}\,\omega_0\subseteq \mathbb{T}\times[\vartheta_1,1-\vartheta_1]$ for some $\vartheta_1\in(0,\vartheta_0)$. Let $\omega$ be the smooth solution to \eqref{main} with initial data $\omega_0$ and let $\psi$ be the associated stream function. Fix a smooth cutoff function
 $\varphi(y)$ satisfying $\varphi\equiv 1$ on $[\vartheta_1/2,1-\vartheta_1/2]$ and ${\rm supp}\,\varphi\subseteq [\vartheta_1/3,1-\vartheta_1/3]$ and $$\int_{\R}e^{2\langle \xi\rangle^{(1+s)/2}}|\widetilde{\varphi}(\xi)|^2\,d\xi\lesssim_{\vartheta_1,s}1.$$

 Define the change of variables
 \begin{equation}\label{U1}
 z=x-tv,\qquad v=b(y),\qquad{\rm for}\,\,x\in\mathbb{T}, y\in[0,1],
 \end{equation}
 and
 \begin{equation}\label{U2}
 f(t,z,v):=\omega(t,x,y),\qquad \phi(t,z,v):=\psi(t,x,y),\qquad f_0(z,v):=\omega_0(x,y),\qquad \Psi(v):=\varphi(y).
 \end{equation}
  
 Assume that for some $\lambda\in(0,\infty)$,
\begin{equation}\label{B2}
\Lambda^2:=\sum_{k\in\mathbb{Z}\backslash\{0\}}\int_{\R}e^{2\lambda\langle k,\xi\rangle^s}\left|\widetilde{f_0}(k,\xi)\right|^2d\xi<\infty.
\end{equation}
 Then 

(i) the localized stream function $\Psi \phi$ satisfies 
\begin{equation}\label{U3}
\left|\widetilde{\,\,\Psi\phi\,\,}(t,k,\xi)\right|\lesssim_{\lambda,\vartheta_1,s}\frac{e^{-\lambda \langle k,\xi\rangle^s}}{k^2+(\xi-kt)^2}\Lambda,\qquad {\rm for}\,\,k\in\mathbb{Z}\backslash\{0\}, \xi\in\R;
\end{equation}

(ii) $f(t)$ is compactly supported in $\mathbb{T}\times[b(\vartheta_1),b(1-\vartheta_1)]$ and  satisfies for all $t\ge0$,
\begin{equation}\label{U4}
\sum_{k\in\mathbb{Z}\backslash\{0\}}\int_{\R} e^{2\lambda\langle k,\xi\rangle^{s}}\left|\widetilde{f}(t,k,\xi)\right|^2d\xi\lesssim_{\lambda,\vartheta_1,s} \sum_{k\in\mathbb{Z}\backslash\{0\}}\int_{\R}e^{2\lambda\langle k,\xi\rangle^s}\left|\widetilde{f_0}(k,\xi)\right|^2d\xi;\end{equation}

(iii) $\lim_{t\to\infty}f(t)=f_{\infty}$ exists and satisfies for $\lambda'\in(0,\lambda)$,
\begin{equation}\label{U5}
\left\|e^{\lambda'\langle k,\xi\rangle^s}\Big[\widetilde{f}(t,k,\xi)-\widetilde{f_{\infty}}(k,\xi)\Big]\right\|_{L^2(k\in\mathbb{Z}\backslash\{0\},\xi\in\R)}\lesssim_{\lambda,\vartheta_1,s} \frac{1}{\langle t\rangle} \left\|e^{\lambda\langle k,\xi\rangle^s}\widetilde{f_0}(k,\xi)\right\|_{L^2(k\in\mathbb{Z}\backslash\{0\},\xi\in\R)}.
\end{equation}
\end{theorem}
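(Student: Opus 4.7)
The plan is to prove Theorem \ref{thm} by combining a spectral representation of the solution, obtained via the generalized eigenfunctions of the linearized Rayleigh operator, with refined Gevrey-regularity estimates on those eigenfunctions, thereby upgrading the Sobolev-space analyses of Wei--Zhang--Zhao and the author \cite{dongyi,JiaL} to Gevrey regularity. The first step is to pass to the coordinates $(z,v)=(x-tv,b(y))$ of the theorem, in which the vorticity equation in \eqref{main} reduces to the transport-free form
\begin{equation*}
\partial_t f(t,z,v) = b''(y(v))\,\partial_z\phi(t,z,v),
\end{equation*}
and the elliptic equation for the stream function becomes, on the $k$-th Fourier mode in $z$,
\begin{equation*}
\bigl[-k^2+b'(y(v))^2(\partial_v-ikt)^2+b''(y(v))(\partial_v-ikt)\bigr]\widehat{\phi}(t,k,v)=\widehat{f}(t,k,v).
\end{equation*}
The sheared elliptic operator in the second display is the source of the gain $1/(k^2+(\xi-kt)^2)$ needed for \eqref{U3}.

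Next, for each fixed $k\neq 0$ I would construct the generalized eigenfunctions $\Phi_{k,c}(y)$ as the limits, as $\eps\to 0^{\pm}$, of solutions of the resolvent problem
\begin{equation*}
(\partial_y^2-k^2)\Phi - \frac{b''(y)}{b(y)-c}\Phi = \frac{\widetilde{\omega_0}(k,y)}{b(y)-c},\qquad \Phi(0)=\Phi(1)=0,
\end{equation*}
as $c$ approaches the real interval $[b(0),b(1)]$ from $\pm i\eps$. Assumption \eqref{Sp} together with a limiting absorption argument provides a well-defined jump $\Pi_{k,c}(y):=\frac{1}{2\pi i}[\Phi_{k,c+i0}(y)-\Phi_{k,c-i0}(y)]$ and the Dunford-type formula
\begin{equation*}
\widetilde{\omega}(t,k,y)=\int_{b(0)}^{b(1)} e^{-ikct}\Pi_{k,c}(y)\,dc.
\end{equation*}
After the change of variables and modulo boundary contributions killed by the cutoff $\Psi$, $\widetilde{f}(t,k,\xi)$ is (up to Jacobian factors) the Fourier transform in $v$ of $v\mapsto \Pi_{k,v}(b^{-1}(v))$. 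Gevrey-$s$ regularity in $v$ of this map, uniformly in $k$, translates directly into \eqref{U4}, and \eqref{U3} follows by combining this regularity with the elliptic gain described above.

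The heart of the argument, and the main obstacle, is establishing the Gevrey-$s$ regularity of $v\mapsto\Pi_{k,v}$ uniformly in $k$. The Rayleigh operator carries the singular coefficient $b''(y)/(b(y)-c)$ across the critical layer $y=b^{-1}(c)$; the plan is to build $\Phi_{k,c}$ from two homogeneous solutions with precisely-controlled logarithmic branching at the critical layer, isolate the singular part (harmless near the boundary because $b''\equiv 0$ there by \eqref{B}), and solve for the regular remainder via a Volterra-type iteration performed on the Fourier side in the spectral variable $c$. The Gevrey condition \eqref{B1} on $b''$ at the strictly larger exponent $(s+1)/2$ provides the slack required to prevent each iteration step from eroding the Gevrey radius $\lambda$. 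Once parts (i) and (ii) are in hand, part (iii) follows by integrating in $t$ the identity $\partial_t f = b''(y(v))\,\partial_z\phi$: by \eqref{U3} the right-hand side decays like $1/(k^2+(\xi-kt)^2)$ at Gevrey radius $\lambda$, and trading a small portion of the radius for time-integrability yields the $1/\langle t\rangle$ scattering rate at any $\lambda'<\lambda$.
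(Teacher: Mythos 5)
Your outline correctly identifies the broad architecture the paper also uses---spectral/Dunford representation via generalized eigenfunctions of the Rayleigh operator, passage to the coordinates $v=b(y)$, the elliptic gain of $(k^2+(\xi-kt)^2)^{-1}$ from the sheared Laplacian, and the fact that part (iii) should follow by integrating $\partial_t f = b''\partial_z\phi$ in time and trading Gevrey radius for time-integrability. However, there are two serious gaps.

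First, and most importantly, you have not identified the paper's central technical mechanism. You propose to prove Gevrey regularity of the eigenfunctions ``via a Volterra-type iteration performed on the Fourier side in the spectral variable $c$,'' but the eigenfunction $\phi^{\iota}_{k,\epsilon}(v,w)$ (where $w$ is the spectral variable $b(y_0)$) is singular \emph{jointly} in $v$ and $w$ because of the factor $\frac{1}{v-w+i\epsilon}$, so there is no Gevrey regularity in $w$ at fixed $v$ to iterate towards. The paper's essential observation is the coordinate shift $(v,w)\mapsto(v+w,w)$, replacing $\phi^{\iota}_{k,\epsilon}(v,w)$ by $\Theta^{\iota}_{k,\epsilon}(v,w):=\Psi(v+w)\phi^{\iota}_{k,\epsilon}(v+w,w)\Psi(w)$: the singular kernel becomes $\frac{1}{v+i\epsilon}$, which no longer involves $w$, so the shifted eigenfunction \emph{is} Gevrey smooth in $w$. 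One then proves the Gevrey bounds in $w$ by applying a Fourier multiplier $A_k=e^{\lambda\langle k,\cdot\rangle^s}$ to the fixed-point equation \eqref{F17.2}, invoking the $H^1_k$ limiting absorption principle at each frequency, and absorbing commutator errors (which are small because of the $\langle\cdot\rangle^{s-1}$ gain in \eqref{P3.2}) using the extra exponent $(s+1)/2>s$ available in the Gevrey regularity of the Green's function and $b''$. Without the shift, a Frobenius/Volterra construction near the critical layer would need to control Gevrey regularity in the spectral parameter of a family of boundary-value problems with singular coefficients across a moving layer, uniformly in $k$; you have given no argument for why such an iteration would close, and the paper does not attempt it.

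Second, your statement that ``$\widetilde{f}(t,k,\xi)$ is (up to Jacobian factors) the Fourier transform in $v$ of $v\mapsto\Pi_{k,v}(b^{-1}(v))$'' is incorrect: it collapses to a diagonal what is genuinely a two-variable object. From \eqref{D2}--\eqref{D3}, one obtains $\widetilde{\Psi\phi_k}(t,\xi)=C\,\widetilde{\Theta_k}(\xi-kt,\xi)$, i.e.\ the Fourier transform in $(v,w)$ of the shifted jump $\Theta_k$ restricted to a time-dependent line, not a 1D Fourier transform of a diagonal restriction. One needs decay of $\widetilde{\Theta_k}(\xi,\eta)$ separately in $\xi$ (from the elliptic gain and the $\Psi$ localization of the Green's function) and in $\eta$ (from the Gevrey smoothness in $w$, which is what requires the coordinate shift). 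Your plan, as written, does not make the object to be estimated precise, and the 1D simplification cannot produce \eqref{U3}.
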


\smallskip
\noindent
\begin{remark}
The assumptions on the compact support of $b''$ and $\omega_0$ (or vanishing with infinite order at the boundary) are likely necessary to prove scattering in Gevrey spaces. Indeed, Zillinger  \cite{Zillinger2} showed that scattering does not hold in high Sobolev spaces if one does not assume the vorticity to vanish at the boundary. The boundary effect can also be seen clearly in \cite{JiaL} in the main asymptotic term for the stream function. The assumptions on the support of $b''$ is required to preserve the compact support of $\omega(t)$ in $\mathbb{T}\times(0,1)$.

\end{remark}

\begin{remark}\label{RSp}
There are a large class of shear flows $b$ satisfying our assumptions. For instance,  for any $b(y)$ which satisfies $|b'|\ge 1$ and $|b'''|<1$, the spectral assumption \eqref{Sp} is satisfied. 
\end{remark}

\begin{remark}
The evolution of \,\,$0$ mode of $\omega$ in $x$ is trivial and hence we removed it from the statement of Theorem \ref{thm}. We note that the change of variable \eqref{U1}-\eqref{U2} is reminiscent of the nonlinear change of coordinates introduced in \cite{BeMa}. Such change of variables are essential to re-normalize the loss of regularity of $\omega$ due to transport in $x$, see \eqref{main}. It is therefore natural, in view of applications to nonlinear analysis, to work in the new variables $z,v$. 

\end{remark}

\subsection{Main ideas of the proof} 
We now outline some of the key ideas used in the proof of Theorem \ref{thm}. The main task is to prove the bounds \eqref{U3} on the localized stream function in the new coordinates. The bounds \eqref{U4}-\eqref{U5} follow relatively easily from \eqref{U3} (with suitable adjustments), in view of the equation \eqref{main} for $\omega$. 

To prove \eqref{U3}, we first use standard spectral representation formula to express $\psi$ as oscillatory integral of the generalized eigenfunctions in the spectral parameter, see \eqref{F3.4}-\eqref{F7}. The main difficulty in proving \eqref{U3} is that the generalized eigenfunctions are not smooth, which can be seen from the equation \eqref{F7} due to the presence of the singular factor $\frac{1}{b(y)-b(y_0)+i\epsilon}$ when $y=y_0$. We note that the generalized eigenfunctions are parametrized by the space variable $y$, the spectral variable $y_0$, together with a smoothing variable $\epsilon\to0$.

The essential new idea of our paper is to correctly capture the singularity of the generalized eigenfunctions in the variables $v=b(y), w=b(y_0)$. The introduction of the new coordinates, which is reminiscent of the nonlinear change of variables introduced in \cite{BeMa}, simplifies the main singular factor from  $\frac{1}{b(y)-b(y_0)+i\epsilon}$ to $\frac{1}{v-w+i\epsilon}$. 

Even in the new coordinates the generalized eigenfunctions are still singular in both $v$ and $w$. To extract the precise singular behavior, we shift the coordinate, using the transform $(v,w)\to(v+w,w)$. This shift of coordinate, though simple, captures the essence of the singularity. The generalized eigenfunctions after the shift of coordinates then become \emph{smooth} in $w$ and all the singularity is now transferred to the variable $v$, which is heuristically clear since the main singular factor is now transformed to $\frac{1}{v+i\epsilon}$. 

To establish the smoothness of the generalized eigenfunctions in $w$, we use the limiting absorption principle, and a two-step approach by considering separately the high frequency and low frequency cases. The assumption of no embedded eigenvalues implies suitable estimates on the generalized eigenfunctions in low Sobolev spaces, which is sufficient for control on the low frequencies in $w$. To pass to control on the high frequencies in $w$, we apply suitable Fourier multiplier operator and estimate the resulting functions, using the bounds coming from the limiting absorption principle. Since the coefficients are not constant functions, we need to estimate a number of commutators by showing that they are perturbative in comparison with the main terms. Similar ideas have recently played a crucial role in \cite{Jia3}. In our case, the implementation is much simpler since our Fourier multipliers are smooth and explicitly given. The complication here is that we need to combine the Fourier analysis with the spectral analysis, which seems to be of independent interest and may be useful for other problems.

\subsection{Organization and Notations} 
The rest of the paper is organized as follows.
\begin{itemize}
 \item In section \ref{rep} we use general spectral projection to derive the representation formula for the stream function in terms of generalized eigenfunctions; \\
\item In section \ref{lim} we establish the limiting absorption principle for the generalized eigenfunctions in $H^1$; \\
\item In the main section \ref{gev} we prove Gevrey bounds for the re-normalized generalized eigenfunctions in the variable $w$; \\
\item In section \ref{evo} we prove the main theorem using the Gevrey bounds from section \ref{gev}; \\
\item In the appendix we recall some basic properties of Gevrey spaces and prove an important bound for the localized Green's function in Gevrey spaces.
\end{itemize}

We often use the notation $A\lesssim B$ to denote $A\leq C B$ with a constant $C$ which can only depend on fixed parameters such as $\lambda,\vartheta_1,s$.

\section{Representation formula for the stream function $\psi$}\label{rep}
Taking Fourier transform in $x$ in the equation \eqref{main} for $\omega$, we obtain that
 \begin{equation}\label{F3.0}
 \partial_t\omega_k+ikb(y)\omega_k-ikb''(y)\psi_k=0,
 \end{equation}
 for $k\in\mathbb{Z}, t\ge0, y\in[0,1]$. In the above, $\omega_k$ and $\psi_k$ are the $k$-th Fourier coefficients for $\omega,\psi$ respectively.
 For each $k\in\mathbb{Z}\backslash\{0\}$, we set for any $g\in L^2(0,1)$,
 \begin{equation}\label{F3.1}
 L_kg(y)=b(y)g(y)+b''(y)\int_0^1G_k(y,z)g(z)dz,
 \end{equation}
 where $G_k$ is the Green's function for the operator $k^2-\frac{d^2}{dy^2}$ on $(0,1)$ with zero Dirichlet boundary condition. 
Then \eqref{F3.0} can be reformulated as
 \begin{equation}\label{F3.2}
 \partial_t\omega_k+ikL_k\omega_k=0.
 \end{equation}
 
 The general spectral property of $L_k$ is well understood, and the spectrum is in general consisted of the continuous spectrum $[b(0), b(1)]$ with possible embedded eigenvalues at the inflection points of $b(y)$, i.e. points $y_c$ where $b''(y_c)=0$, together with some discrete eigenvalues with nonzero imaginary part which can only accumulate at embedded eigenvalues, for small $k$. See for instance \cite{Faddeev}. 

In view of \eqref{Sp}, the operator $g\in L^2\to b(y)g+b''(y)\int_0^1G_k(y,z)g(z)\,dz\in L^2$ has no eigenvalues with
the value $b(y_0), y_0\in[0,1]$,  for any $ k\in\mathbb{Z}\backslash\{0\}.$ As discussed in Remark \ref{RSp}, the spectral condition \eqref{Sp} is satisfied by a large class of shear flows $b$.

 \begin{proposition}\label{F3.3}
 Suppose that the spectral condition \eqref{Sp} holds and that $\omega_0$ has trivial projection in the discrete modes. Then the stream function $\psi_k(t,y)$, $k\in\mathbb{Z}\backslash\{0\}, y\in[0,1]$ has the representation 
 \begin{equation}\label{F3.4}
 \psi_k(t,y)=-\frac{1}{2\pi i}\lim_{\epsilon\to0+}\int_{0}^1e^{-ikb(y_0) t}b'(y_0)\left[\psi_{k,\epsilon}^{-}(y,y_0)-\psi_{k,\epsilon}^{+}(y,y_0)\right]dy_0,
 \end{equation}
 where $\psi_{k,\epsilon}^{\iota}(y,y_0)$ for $\iota\in\{+,-\}, y,y_0\in[0,1],k\in\mathbb{Z}\backslash\{0\}$,  and  sufficiently small $\epsilon\in[-1/4,1/4]\backslash\{0\}$, are the solutions to
 \begin{equation}\label{F7}
 \begin{split}
  -k^2\psi_{k,\epsilon}^{\iota}(y,y_0)+\frac{d^2}{dy^2}\psi_{k,\epsilon}^{\iota}(y,y_0)-\frac{b''(y)}{b(y)-b(y_0)+i\iota\epsilon}\psi_{k,\epsilon}^{\iota}(y,y_0)=\frac{-\omega^k_0(y)}{b(y)-b(y_0)+i\iota\epsilon}.
 \end{split}
 \end{equation}
 \end{proposition}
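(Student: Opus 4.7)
The plan is to derive \eqref{F3.4} by applying the Dunford-Taylor holomorphic functional calculus to the evolution $\omega_k(t)=e^{-iktL_k}\omega_0^k$ coming from \eqref{F3.2}, and then collapsing the spectral contour onto the continuous spectrum $[b(0),b(1)]$. Since $L_k$ is a compact perturbation (by $g\mapsto b''(y)\int_0^1 G_k(y,z)g(z)\,dz$) of multiplication by $b(y)$, its essential spectrum equals $[b(0),b(1)]$ and the remainder of its spectrum consists of a finite collection of discrete eigenvalues. Under \eqref{Sp}, none of these discrete eigenvalues are embedded in $[b(0),b(1)]$, so they lie at positive distance from the continuous spectrum and I may enclose $[b(0),b(1)]$ by a counterclockwise contour $\Gamma$ avoiding them. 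By the hypothesis that $\omega_0$ projects trivially onto the discrete eigenspaces, the full Dunford-Taylor formula reduces to
\[\omega_k(t,y)=\frac{1}{2\pi i}\oint_{\Gamma}e^{-ikt\mu}(\mu-L_k)^{-1}\omega_0^k(y)\,d\mu.\]

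The next step is to deform $\Gamma$ to two horizontal segments at heights $\pm\epsilon$ just above and below $[b(0),b(1)]$ and let $\epsilon\to 0^+$. Granting that the boundary resolvents $(\mu\pm i\epsilon-L_k)^{-1}\omega_0^k$ admit limits in a topology suitable to justify dominated passage to the limit inside the integral (the limiting absorption principle, whose proof is the business of Section \ref{lim} and which depends crucially on \eqref{Sp}), the jump across the spectrum yields
\[\omega_k(t,y)=\frac{1}{2\pi i}\lim_{\epsilon\to 0^+}\int_{b(0)}^{b(1)}e^{-ikt\mu}\bigl[(\mu-i\epsilon-L_k)^{-1}-(\mu+i\epsilon-L_k)^{-1}\bigr]\omega_0^k(y)\,d\mu.\]
Changing variable $\mu=b(y_0)$ via monotonicity of $b$ and applying the operator $\omega\mapsto -\int_0^1 G_k(y,z)\omega(z)\,dz$ to pass from $\omega_k$ to $\psi_k$ (consistent with $\omega_k=(\partial_y^2-k^2)\psi_k$, $\psi_k|_{y=0,1}=0$) produces \eqref{F3.4} with the identification $\psi_{k,\epsilon}^{\iota}(y,y_0):=-\int_0^1 G_k(y,z)\bigl[(b(y_0)-i\iota\epsilon-L_k)^{-1}\omega_0^k\bigr](z)\,dz$.

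Finally, one checks that this candidate solves \eqref{F7}. Writing $\omega_{k,\epsilon}^{\iota}:=(b(y_0)-i\iota\epsilon-L_k)^{-1}\omega_0^k$ and unpacking the definition \eqref{F3.1} of $L_k$, the resolvent identity expands, after using $-\int G_k \omega_{k,\epsilon}^{\iota}\,dz=\psi_{k,\epsilon}^{\iota}$, to $(b(y)-b(y_0)+i\iota\epsilon)\omega_{k,\epsilon}^{\iota}-b''(y)\psi_{k,\epsilon}^{\iota}=-\omega_0^k$. Dividing through by $b(y)-b(y_0)+i\iota\epsilon$ and substituting $\omega_{k,\epsilon}^{\iota}=(\partial_y^2-k^2)\psi_{k,\epsilon}^{\iota}$ produces \eqref{F7} verbatim.

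The main obstacle is the limiting absorption step. Because $L_k$ is non-self-adjoint, Stone's formula is not available, and one must show by hand that the resolvents $(\mu\pm i\epsilon-L_k)^{-1}\omega_0^k$ possess $\epsilon\to 0^+$ limits regular enough (for instance in $H^1$) to justify both deforming $\Gamma$ onto the spectrum and exchanging limit with integration. The spectral condition \eqref{Sp} is indispensable here: an embedded eigenvalue would cause a non-integrable pole of the resolvent on the real axis, breaking the representation. The rest of the argument, once these limiting resolvents are available and continuous in $y_0$, is purely algebraic as indicated above.
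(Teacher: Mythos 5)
Your proposal is correct and follows essentially the same route as the paper: the paper compresses the Dunford--Taylor contour integral and its collapse onto the continuous spectrum into the phrase ``standard theory of spectral projection,'' whereas you spell out the compact-perturbation structure of $L_k$, the contour deformation, and the role of \eqref{Sp} and the limiting absorption principle in justifying the limit. The identification $\psi_{k,\epsilon}^{\iota}(y,y_0)=-\int_0^1 G_k(y,z)\big[(b(y_0)-i\iota\epsilon-L_k)^{-1}\omega_0^k\big](z)\,dz$ coincides (after the sign rearrangement $(b(y_0)-i\iota\epsilon-L_k)^{-1}=-(-b(y_0)+L_k+i\iota\epsilon)^{-1}$) with the paper's definition in \eqref{F6}, and your algebraic verification of \eqref{F7} is exactly the step the paper leaves implicit.
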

 
 \begin{proof}
By standard theory of spectral projection, we  have
 \begin{equation}\label{F4}
 \begin{split}
 \omega_k(t,y)&=\frac{1}{2\pi i}\lim_{\epsilon\to0+}\int_{\R}e^{i\lambda t}\left[(\lambda+kL_k-i\epsilon)^{-1}-(\lambda+kL_k+i\epsilon)^{-1}\right]\omega^k_0\,d\lambda\\
 &=\frac{1}{2\pi i}\lim_{\epsilon\to0+}\int_{0}^1e^{-ikb(y_0) t}b'(y_0)\left[(-b(y_0)+L_k-i\epsilon)^{-1}-(-b(y_0)+L_k+i\epsilon)^{-1}\right]\omega^k_0\,dy_0.
 \end{split}
 \end{equation}
We then obtain
 \begin{equation}\label{F5}
 \begin{split}
 \psi_k(t,y)&=-\frac{1}{2\pi i}\lim_{\epsilon\to0+}\int_{0}^1e^{-ikb(y_0) t}b'(y_0)\int_0^1G_k(y,z)\\
 &\hspace{1in}\times\bigg\{\Big[(-b(y_0)+L_k-i\epsilon)^{-1}-(-b(y_0)+L_k+i\epsilon)^{-1}\Big]\omega^k_0\bigg\}(z)\,dz dy_0\\
 &=-\frac{1}{2\pi i}\lim_{\epsilon\to0+}\int_{0}^1e^{-ikb(y_0) t}b'(y_0)\left[\psi_{k,\epsilon}^{-}(y,y_0)-\psi_{k,\epsilon}^{+}(y,y_0)\right]dy_0.
 \end{split}
 \end{equation}
 In the above, 
 \begin{equation}\label{F6}
 \begin{split}
 &\psi_{k,\epsilon}^{+}(y,y_0):=\int_0^1G_k(y,z)\Big[(-b(y_0)+L_k+i\epsilon)^{-1}\omega^k_0\Big](z)\,dz,\\
 &\psi_{k,\epsilon}^{-}(y,y_0):=\int_0^1G_k(y,z)\Big[(-b(y_0)+L_k-i\epsilon)^{-1}\omega^k_0\Big](z)\,dz.
 \end{split}
 \end{equation}
Clearly $\psi_{k,\epsilon}^{+}(y,y_0), \psi_{k,\epsilon}^{-}(y,y_0)$ satisfy \eqref{F7}. The proposition is now proved.

\end{proof}

 
\begin{remark}\label{F6.1}
The existence of $\psi^{\iota}_{k,\epsilon}$ for sufficiently small $\epsilon\neq0$ follows from our spectral assumptions, which imply the solvability of \eqref{F7} for sufficiently small $\epsilon\neq0$, see \eqref{T6}.

By the assumption that ${\rm supp}\,b''\subseteq[\vartheta_1,1-\vartheta_1]$, it is clear that to study the evolutionary equation \eqref{main} for $\omega$, it suffices to study $\varphi(y)\psi_k(t,y)$.

\end{remark}

\section{The limiting absorption principles}\label{lim}

\subsection{Elementary properties of the Green's function}\label{sub1}
For integers $k\in\mathbb{Z}\setminus\{0\}$, recall that the Green's function $G_k(y,z)$ solves  
\begin{equation}\label{eq:Helmoltz}
-\frac{d^2}{dy^2}G_k(y,z)+k^2G_k(y,z)=\delta_z(y),
\end{equation}
with Dirichlet boundary conditions $G_k(0,z)=G_k(1,z)=0$, $z\in [0,1]$. $G_k$ has the explicit formula 
\begin{equation}\label{eq:GreenFunction}
G_k(y,z)=\frac{1}{k\sinh k}
\begin{cases}
\sinh(k(1-z))\sinh (ky)\qquad&\text{ if }y\leq z,\\
\sinh (kz)\sinh(k(1-y))\qquad&\text{ if }y\geq z,
\end{cases}
\end{equation}
and the symmetry
\begin{equation}\label{Gk2}
G_k(y,z)=G_k(z,y), \qquad {\rm for}\,\,k\in\mathbb{Z}\backslash\{0\}, y, z\in[0,1].
\end{equation}

We note the following bounds for $G_k$
\begin{equation}\label{Gk1.1}
\begin{split}
&\sup_{y\in[0,1], |A|\leq10}\bigg[|k|^2\big\|G_k(y,z)(\log{|z-A|})^{m}\big\|_{L^1(z\in[0,1])}+|k|\big\|\partial_{y,z}G_k(y,z)(\log{|z-A|})^{m}\big\|_{L^1(z\in[0,1])}\bigg]\\
&\qquad+\sup_{y\in[0,1],\alpha\in\{0,1\}}\bigg[|k|^{3/2-\alpha}\left\|\partial_{y,z}^{\alpha}G_k(y,z)\right\|_{L^2(z\in[0,1])}\bigg]\lesssim |\log{\langle k\rangle}|^m,\qquad {\rm for}\,\,m\in\{0,1,2,3\}.
\end{split}
\end{equation}

Define
\begin{equation}\label{bX5}
G_k'(y,z)=\frac{1}{\sinh{k}}\left\{\begin{array}{lr}
                                         -k\cosh{(k(1-z))}\cosh{(ky)}, &0\leq y\leq z\leq 1;\\
                                         -k\cosh{(kz)}\cosh{(k(1-y))}, &1\ge y>z\ge0.
                                          \end{array}\right.
\end{equation}
We note that
\begin{equation}\label{Gk1.2}
\partial_y\partial_zG_k(y,z)=\partial_z\partial_yG_k(y,z)=\delta(y-z)+G'_k(y,z),\qquad{\rm for}\,\,y,z\in[0,1].
\end{equation}

By direct computation, we see $G_k'$ satisfies the bounds
\begin{equation}\label{Gk3.1}
\begin{split}
&\sup_{y\in[0,1], |A|\leq10}\bigg[\big\|G_k'(y,z)(\log{|z-A|})^{m}\big\|_{L^1(z\in[0,1])}+|k|^{-1}\big\|\partial_{y,z}G_k'(y,z)(\log{|z-A|})^{m}\big\|_{L^1(z\in[0,1])}\bigg]\\
&\quad+\sup_{y\in[0,1],\alpha\in\{0,1\}}\bigg[|k|^{-1/2-\alpha}\left\|\partial_{y,z}^{\alpha}G_k'(y,z)\right\|_{L^2(z\in[0,1])}\bigg]\lesssim |\log{\langle k\rangle}|^m,\qquad {\rm for}\,\,m\in\{0,1,2,3\}.
\end{split}
\end{equation}

\subsection{The limiting absorption principle in the variable $y$}

Fix $\epsilon\in[-1/4,1/4]\backslash\{0\}, y_0\in[0,1],  k\in\mathbb{Z}\backslash\{0\}$. Recall the definition of the cutoff function $\varphi$ from Theorem \ref{thm} and define for each $g\in L^2(0,1)$ the operator
\begin{equation}\label{T1}
T_{k,y_0,\epsilon}g(y):=\int_{\R}\varphi(y)G_k(y,z)\frac{b''(z)g(z)}{b(z)-b(y_0)+i\epsilon}dz.
\end{equation}

To obtain the optimal dependence on the frequency variable $k$, we define
\begin{equation}\label{T1.1}
\|g\|_{H^1_k(\R)}:=\|g\|_{L^2(\R)}+|k|^{-1}\|g'\|_{L^2(\R)}.
\end{equation}

\begin{lemma}\label{T2}
For  $\epsilon\in[-1/4,1/4]\backslash\{0\}, y_0\in[0,1],  k\in\mathbb{Z}\backslash\{0\}$, the operator $T_{k,y_0,\epsilon}$ satisfies the bound
\begin{equation}\label{T3}
\|T_{k,y_0,\epsilon}g\|_{H^1_k(\R)}\lesssim |k|^{-1/3}\|g\|_{H^1_k(\R)},\qquad {\rm for\,\,all}\,\,g\in H^{1}_k(\R).
\end{equation}
In addition, we have the more precise regularity structure
\begin{equation}\label{T3.1}
\left\|\partial_yT_{k,y_0,\epsilon}g(y)+\varphi(y)\frac{b''(y)g(y)}{b'(y)}\log{(b(y)-b(y_0)+i\epsilon)}\right\|_{W^{1,1}(\R)}\lesssim \langle k\rangle^{3/4}\|g\|_{H^1_k(\R)}.
\end{equation}

\end{lemma}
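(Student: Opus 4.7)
The plan is to extract the $\epsilon$-singular part of $T_{k,y_0,\epsilon}g$ via a single integration by parts in $z$. Since $b'\gtrsim 1$ on $[0,1]$, one can write
$$\frac{1}{b(z)-b(y_0)+i\epsilon}=\frac{1}{b'(z)}\,\partial_z L(z),\qquad L(z):=\log\bigl(b(z)-b(y_0)+i\epsilon\bigr).$$
The support condition ${\rm supp}(b'')\subseteq[\vartheta_0,1-\vartheta_0]$ kills the boundary terms at $z=0,1$, so
$$T_{k,y_0,\epsilon}g(y)=-\varphi(y)\int_0^1 \partial_z\bigl[G_k(y,z)\,\tilde h(z)\bigr]\,L(z)\,dz,\qquad \tilde h(z):=\frac{b''(z)g(z)}{b'(z)}.$$
Because $|L(z)|\lesssim |\log|z-y_0||+1$ uniformly in $\epsilon$ and $y_0$ (using $b'\gtrsim 1$), every remaining estimate reduces to a product of derivatives of $G_k$ against log-type weights, which is precisely what is quantified by \eqref{Gk1.1} and \eqref{Gk3.1}.

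For the $H^1_k$-bound \eqref{T3}, I would expand $\partial_z[G_k\tilde h]=\partial_z G_k(y,z)\tilde h(z)+G_k(y,z)\tilde h'(z)$, estimate by Minkowski in $y$, and apply the log-weighted bounds in \eqref{Gk1.1}, together with the one-dimensional Sobolev embedding $\|\tilde h\|_{L^\infty}\lesssim |k|^{1/2}\|g\|_{H^1_k}$. The extra $|k|^{-1/3}$ gain over a naive $|k|^{-1/2}$ comes from interpolating the $L^1_z$ and $L^2_z$ estimates for $G_k$ and $\partial_{y,z}G_k$, using that $\tilde h$ is supported strictly inside $[\vartheta_0,1-\vartheta_0]$, so the log weight is effectively bounded away from $y=y_0$ on that region.

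For the structural refinement \eqref{T3.1}, I would differentiate the IBP representation in $y$:
$$\partial_y T_{k,y_0,\epsilon}g(y)=\varphi'(y)\mathcal I(y)-\varphi(y)\int_0^1 \partial_y\partial_z\bigl[G_k(y,z)\tilde h(z)\bigr]L(z)\,dz,$$
where $\mathcal I(y)$ is the same IBP integral from before. The key input is the distributional identity \eqref{Gk1.2}, namely $\partial_y\partial_z G_k(y,z)=\delta(y-z)+G_k'(y,z)$. Applying it and expanding by Leibniz, the $\delta$ picks off the diagonal value
$$-\varphi(y)\tilde h(y)L(y)=-\varphi(y)\frac{b''(y)g(y)}{b'(y)}\log\bigl(b(y)-b(y_0)+i\epsilon\bigr),$$
which cancels exactly with the log correction added on the left of \eqref{T3.1}. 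What remains is
$$\varphi'(y)\mathcal I(y)-\varphi(y)\!\int\! G_k'(y,z)\tilde h(z)L(z)\,dz-\varphi(y)\!\int\! \partial_y G_k(y,z)\tilde h'(z)L(z)\,dz,$$
and each term is controlled in $W^{1,1}(\R)$ by the log-weighted $L^1$ and $L^2$ estimates in \eqref{Gk1.1} and \eqref{Gk3.1}, applied both to the integrand and to its $y$-derivative, together with $\|\tilde h\|_{L^\infty}+\|\tilde h'\|_{L^2}\lesssim |k|\,\|g\|_{H^1_k}$.

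The main obstacle I anticipate is the careful bookkeeping of the $|k|$-powers: neither the $L^1_z$ nor the $L^2_z$ bounds on $G_k$ alone yield the advertised $|k|^{-1/3}$ in \eqref{T3}, so one must interpolate between them (equivalently, split the $z$-integral into a neighborhood of $y_0$ of size $|k|^{-\alpha}$ and its complement, and optimize $\alpha$); the same optimization is what produces the $\langle k\rangle^{3/4}$ factor in \eqref{T3.1}. Beyond this numerology, the whole argument is essentially algebraic, driven by exactly one integration by parts in $z$ and the jump identity \eqref{Gk1.2}, so there are no conceptual difficulties once the decomposition above is in place.
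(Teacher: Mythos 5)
Your proposal follows the same route as the paper: one integration by parts in $z$ (writing $\frac{1}{b(z)-b(y_0)+i\epsilon}=\frac{1}{b'(z)}\partial_z\log(b(z)-b(y_0)+i\epsilon)$, with the boundary terms killed by the support of $b''$), then differentiation in $y$ and the jump identity $\partial_y\partial_z G_k=\delta(y-z)+G_k'$ to isolate the diagonal log term, and finally the log-weighted Green's function estimates \eqref{Gk1.1}, \eqref{Gk3.1} via Minkowski and H\"older. This is precisely \eqref{T4}--\eqref{T4.1} in the paper, and the diagonal cancellation you identify is exactly what produces the correction in \eqref{T3.1}.

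One small inaccuracy to flag: you argue that the compact support of $\tilde h$ inside $[\vartheta_0,1-\vartheta_0]$ keeps the log weight bounded away from its singularity. That is not true in general, since $y_0$ ranges over all of $[0,1]$ (in particular, $y_0$ can lie inside ${\rm supp}\,\tilde h$). The log singularity is handled not by avoiding it but by the fact that $\log$ is $L^p_z$-integrable for every $p<\infty$; this is already built into the log-weighted $L^1_z$ bounds in \eqref{Gk1.1}, \eqref{Gk3.1}, so the conclusion stands even though that particular justification does not.
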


\begin{proof}
Using integration by parts, we obtain 
\begin{equation}\label{T4}
\begin{split}
T_{k,y_0,\epsilon}g(y)&=\int_{\R}\varphi(y)G_k(y,z)\frac{b''(z)g(z)}{b'(z)}\partial_z\log{(b(z)-b(y_0)+i\epsilon)}dz\\
&=-\int_{\R}\varphi(y)\partial_zG_k(y,z)\frac{b''(z)g(z)}{b'(z)}\log{(b(z)-b(y_0)+i\epsilon)}dz\\
&\quad-\int_{\R}\varphi(y)G_k(y,z)\partial_z\left[\frac{b''(z)g(z)}{b'(z)}\right]\log{(b(z)-b(y_0)+i\epsilon)}dz,
\end{split}
\end{equation}
and
\begin{equation}\label{T4.1}
\begin{split}
&\partial_yT_{k,y_0,\epsilon}g(y)=-\int_{\R}\varphi'(y)\partial_zG_k(y,z)\frac{b''(z)g(z)}{b'(z)}\log{(b(z)-b(y_0)+i\epsilon)}dz\\
&-\varphi(y)\frac{b''(y)g(y)}{b'(y)}\log{(b(y)-b(y_0)+i\epsilon)}-\int_{\R}\varphi(y)G_k'(y,z)\frac{b''(z)g(z)}{b'(z)}\log{(b(z)-b(y_0)+i\epsilon)}dz\\
 &-\int_{\R}\partial_y\left[\varphi(y)G_k(y,z)\right]\partial_z\left[\frac{b''(z)g(z)}{b'(z)}\right]\log{(b(z)-b(y_0)+i\epsilon)}dz.
\end{split}
\end{equation}
Then \eqref{T3}-\eqref{T3.1} follow directly from the properties of Green's functions summarized in subsection \ref{sub1}, Minkowski and H\"older inequalities.
\end{proof}

We now prove the limiting absorption principle, using the assumption that there is no embedded eigenvalues.

\begin{lemma}\label{T5}
For sufficiently small and non-zero $\epsilon$, the following bounds hold
\begin{equation}\label{T6}
\|g+T_{k,y_0,\epsilon}g\|_{H^1_k(\R)}\gtrsim \|g\|_{H^1_k(\R)},
\end{equation}
for all  $y_0\in[0,1],  k\in\mathbb{Z}\backslash\{0\}$ and any $g\in H^1_k(\R)$.
\end{lemma}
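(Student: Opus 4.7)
The strategy is a contradiction argument, using \eqref{T3} to dispose of large $|k|$ and using \eqref{T3.1} together with the spectral assumption \eqref{Sp} to handle the remaining cases.

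First I use \eqref{T3}: there is an absolute $K_0$ such that whenever $|k|\geq K_0$, a Neumann series yields $\|(I+T_{k,y_0,\epsilon})^{-1}\|_{H^1_k\to H^1_k}\leq 2$, proving \eqref{T6} in that range. It therefore remains to prove \eqref{T6} for each fixed $k$ with $0<|k|<K_0$, the finitely many resulting constants being absorbed into one. Fix such a $k$ and suppose \eqref{T6} fails: choose $\epsilon_n\to 0$, $y_{0,n}\in[0,1]$ and $g_n$ with $\|g_n\|_{H^1_k}=1$ but $g_n+T_{k,y_{0,n},\epsilon_n}g_n\to 0$ in $H^1_k$. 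The output of $T$ is supported in $\operatorname{supp}\varphi\subseteq[\vartheta_1/3,1-\vartheta_1/3]$, so $g_n$ is essentially compactly supported; after passing to subsequences, $\epsilon_n\to 0^{\iota}$ for some $\iota\in\{+,-\}$, $y_{0,n}\to y_0^*\in[0,1]$, and $g_n\to g^*$ weakly in $H^1$ and strongly in $C^0$ by Rellich.

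Next, \eqref{T3.1} exhibits $T_{k,y_0,\epsilon}:H^1\to H^1$ as compact: writing $\partial_y T_{k,y_0,\epsilon}g=-\varphi(b''g/b')\log(b-b(y_0)+i\epsilon)+R_{k,y_0,\epsilon}[g]$, the remainder $R_{k,y_0,\epsilon}[g]$ is uniformly bounded in $W^{1,1}$ and hence compact in $L^2$, while the log-singular term is compact $H^1\to L^2$ via the one-dimensional embedding $H^1\hookrightarrow C^0$ and the uniform $L^2$ bound on $\log(b-b(y_{0,n})+i\epsilon_n)$. Passing to the limit using Plemelj's formula for $1/(b(z)-b(y_0^*)+i\iota 0)$ yields $T_{k,y_{0,n},\epsilon_n}g_n\to T^{\iota}_{k,y_0^*,0}g^*$ in $H^1$, and consequently $g^*+T^{\iota}_{k,y_0^*,0}g^*=0$ as well as $g_n\to g^*$ strongly in $H^1$, so $\|g^*\|_{H^1}=1$. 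Setting $\psi^*(y):=-\int_0^1 G_k(y,z)b''(z)g^*(z)/(b(z)-b(y_0^*)+i\iota 0)\,dz$ gives $g^*=\varphi\psi^*$, and since $\varphi\equiv 1$ on $\operatorname{supp}b''$ one has $b''g^*=b''\psi^*$ there. Thus $\psi^*\in H^1_0(0,1)$ solves distributionally
\begin{equation*}
-\partial_y^2\psi^*+k^2\psi^*+\frac{b''\psi^*}{b-b(y_0^*)+i\iota 0}=0,\qquad \psi^*(0)=\psi^*(1)=0.
\end{equation*}

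Separating real and imaginary parts via $1/(x+i\iota 0)=\mathrm{p.v.}(1/x)-i\iota\pi\delta(x)$ and requiring that the $\delta$-concentration at $y_0^*$ be compatible with $\psi^*\in H^1_0$ forces $b''(y_0^*)\psi^*(y_0^*)=0$, so the singular coefficient is actually regular and $\psi^*$ is a classical embedded eigenfunction of $L_k$ at spectral value $b(y_0^*)\in[b(0),b(1)]$. Hypothesis \eqref{Sp} then forces $\psi^*\equiv 0$, contradicting $\|g^*\|_{H^1}=1$. The principal obstacle is this Plemelj step: one must carefully justify that the $\delta$-type imaginary contributions force $b''(y_0^*)\psi^*(y_0^*)=0$ and that, once this holds, $\psi^*$ is a genuine embedded eigenfunction covered by \eqref{Sp}; by comparison, the compactness and limiting steps are routine applications of \eqref{T3.1} and the one-dimensional Sobolev embedding $H^1\hookrightarrow C^0$.
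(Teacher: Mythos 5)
Your overall strategy — contradiction, pass to the limit $\epsilon\to 0$, identify an embedded eigenfunction, contradict \eqref{Sp} — is the same as the paper's, and most of the steps (using \eqref{T3} to dispose of large $|k|$, using \eqref{T3.1} for compactness, defining $\psi^*$ as the limiting stream function) track the paper closely. You honestly flag the Plemelj step as the principal obstacle; unfortunately the justification you give there does not work, and that is precisely where the substance of the lemma lies.

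You assert that the delta contribution $-i\iota\pi b''(y_0^*)\psi^*(y_0^*)\delta(y-y_0^*)/b'(y_0^*)$ must vanish because it is ``incompatible with $\psi^*\in H^1_0$.'' This is false: a Dirac mass in $\psi^{*\prime\prime}$ merely says $\psi^{*\prime}$ has a jump discontinuity, and such a $\psi^*$ is still in $H^1_0$ (absolutely continuous with derivative in $L^2$). There is also no reality constraint on $\psi^*$ to appeal to, since $\psi^*$ is complex-valued. So nothing about membership in $H^1_0$ forces the delta to vanish, and without that you never get a genuine eigenfunction. The paper instead obtains $b''(y_*)h(y_*)=0$ by pairing the distributional equation \eqref{F6.4} with $\overline{h}$, integrating over $(0,1)$, and taking the imaginary part: after integration by parts, $\int |h'|^2+k_*^2\int|h|^2$ and the principal-value term $\int \mathrm{p.v.}\,(b-b(y_*))^{-1}b''|h|^2$ are real, so the purely imaginary delta contribution $iC_{y_*}b''(y_*)|h(y_*)|^2$ must vanish, giving $b''(y_*)h(y_*)=0$ because $C_{y_*}\neq 0$. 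This is a genuinely different mechanism (an energy identity, not a regularity obstruction), and it is what your argument is missing. Once that is in place, the remainder of your argument (defining $H=b''\psi^*/(b-b(y_0^*))\in L^2$, $H\not\equiv 0$, and contradicting \eqref{Sp}) matches the paper, though note that the eigenfunction of $L_k$ is $H$ rather than $\psi^*$ itself.
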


\begin{proof}
We prove \eqref{T6} by contradiction. Thus we assume that there exist for $j\ge1$, a sequence of numbers $k_j\in\mathbb{Z}\backslash\{0\}$, $y_j\in[0,1]$, $\epsilon_j\in\mathbb{\R}\backslash\{0\}\to 0$ and functions  $g_j\in H_{k_j}^1(\R)$ with $\|g_j\|_{H_{k_j}^1(\R)}=1$, satisfying 
$k_j\to k_{\ast}\in(\mathbb{Z}\backslash\{0\})\cup \{\pm\infty\}$, $y_j\to y_{\ast}\in[0,1]$ as $j\to\infty$, such that
\begin{equation}\label{T6.1}
\left\|g_j+T_{k_j,y_j,\epsilon_j}g_j\right\|_{H_{k_j}^1(\R)}\to 0,\qquad{\rm as}\,\,j\to\infty.
\end{equation}
The bounds \eqref{T3} and \eqref{T6.1} imply that $|k_j|\lesssim 1$. Thus $k_{\ast}\in \mathbb{Z}\backslash\{0\}$. In addition, using $\|g_j\|_{H_{k_j}^1(\R)}=1$, the bounds \eqref{T3.1}, and noting the compact support property of $T_{k_j,y_j,\epsilon_j}g_j$, by passing to a subsequence, we can assume that $g_j\to g$ in $H^1(\R)$, where $\|g\|_{H^1_{k_{\ast}}}=1$.

In view of the formula \eqref{T4}, we obtain from \eqref{T6.1} that
\begin{equation}\label{T6.2}
g(y)+\lim_{j\to\infty}\int_{\R}\varphi(y)G_{k_{\ast}}(y,z)\frac{b''(z)g(z)}{b(z)-b(y_{\ast})+i\epsilon_j}\,dz=0.
\end{equation}
Hence we can assume that $g=\varphi h$ with $h\in H_0^1(\R)$ and $h\not\equiv0$. (We recall that by the definitions $\phi\equiv1$ on the support of $b''$.) In addition,
\begin{equation}\label{F6.3}
h(y)+\lim_{j\to\infty}\int_{\R}G_{k_{\ast}}(y,z)\frac{b''(z)h(z)}{b(z)-b(y_{\ast})+i\epsilon_j}\,dz=0,
\end{equation}
first on the support of $\varphi$, in view of \eqref{T6.2}. Then by re-defining $h$ outside of the support of $\varphi$ which does not affect the value of the integral in \eqref{F6.3} thanks to the assumption on the support of $b''$, we can assume \eqref{F6.3} holds for $y\in[0,1]$.
Applying $k_{\ast}^2-\frac{d^2}{dy^2}$ to \eqref{F6.3}, we get
\begin{equation}\label{F6.4}
k_{\ast}^2h(y)-h''(y)+\lim_{j\to\infty}\frac{(b(y)-b(y_{\ast}))b''(y)h(y)}{(b(y)-b(y_{\ast}))^2+\epsilon_j^2}+iC_{y_{\ast}}b''(y_{\ast})h(y_{\ast})\delta(y-y_{\ast})=0,
\end{equation}
in the sense of distributions for $y\in(0,1)$, with some $C_{y_{\ast}}\neq0$. Multiplying \eqref{F6.4} with $\overline{h(y)}$, integrating over $(0,1)$, and taking the imaginary part, we get that
$b''(y_{\ast})h(y_{\ast})=0$. Therefore
\begin{equation}\label{F6.5}
H(y):=\frac{b''(y)h(y)}{b(y)-b(y_{\ast})}\in L^2(\R)\qquad {\rm and} \qquad H\not\equiv0.
\end{equation}
It follows from \eqref{F6.3} that $H$ satisfies 
\begin{equation}\label{F6.6}
(b(y)-b(y_{\ast}))H(y)+b''(y)\int_0^1G_{k_{\ast}}(y,z)H(z)\,dz=0,
\end{equation}
which contradicts our spectral assumption that $b(y_{\ast})$ is not an embedded eigenvalue for $L_k$. The lemma is then proved.
\end{proof}

\subsection{The limiting absorption principles in the variable $v$}
For our purposes, it is more convenient to work with the variables
\begin{equation}\label{F8}
v=b(y), \qquad w=b(y_0), \qquad {\rm for}\,\,y,y_0\in[0,1].
\end{equation}
Set 
\begin{equation}\label{F8.0}
\lv:=b(0),\qquad \uv:=b(1), \qquad B(v):=b'(y)\qquad {\rm and}\qquad \Psi(v):=\varphi(y)\,\,{\rm for}\,\,v\in[\lv,\uv].
\end{equation}
Set also
\begin{equation}\label{F15}
\mathcal{G}_k(v,w):=G_k(y,z),\qquad {\rm where}\qquad v=b(y),\,\,w=b(z).
\end{equation}
$\mathcal{G}_k$ verifies the equation
\begin{equation}\label{F16}
k^2\mathcal{G}_k(v,w)-B^2(v)\partial_v^2\mathcal{G}_k(v,w)-B(v)\partial_vB(v)\partial_v\mathcal{G}_k(v,w)=B(w)\delta(v-w),
\end{equation}
with $\mathcal{G}_k(\lv,w)=\mathcal{G}_k(\uv,w)=0$. We note also that $\mathcal{G}_k(v,w)=\mathcal{G}_k(w,v),$ for $v,w\in[\lv,\uv]$.

\begin{lemma}\label{F16.1}
Set for any $g\in H^1_k(\R)$ and $\epsilon\in[-1/4,1/4]\backslash\{0\}, k\in\mathbb{Z}\backslash\{0\}, w\in[\lv,\uv]$,
\begin{equation}\label{F16.2}
S_{k,w,\epsilon}g(v):=\int_{\R}\Psi(v)\mathcal{G}_k(v,v')\partial_{v'}B(v')\frac{g(v')}{v'-w+i\epsilon}dv'.
\end{equation}
Then for sufficiently small nonzero $\epsilon$, the following bounds hold
\begin{equation}\label{F16.3}
\|S_{k,w,\epsilon}g\|_{H^1_k(\R)} \lesssim|k|^{-1/3} \|g\|_{H^1_k(\R)}\qquad {\rm and}\qquad\|g\|_{H^1_k(\R)}\lesssim\|g+S_{k,w,\epsilon}g\|_{H^1_k(\R)}
\end{equation}
for all $k\in\mathbb{Z}\backslash\{0\},w\in[\lv,\uv]$ and  any $g\in H^1_k(\R)$.
\end{lemma}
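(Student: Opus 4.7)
The strategy is to regard $S_{k,w,\epsilon}$ as the push-forward of $T_{k,y_0,\epsilon}$ under the change of variables $v=b(y)$, $w=b(y_0)$, and thereby deduce \eqref{F16.3} directly from Lemmas~\ref{T2} and~\ref{T5}. First I would extend $b$ to a smooth diffeomorphism $\bar b:\R\to\R$ with derivative bounded above and below by positive constants depending only on $\vartheta_0$, and associate to each $g\in H^1_k(\R)$ its pullback $\tilde g(y):=g(\bar b(y))$. The bound $\vartheta_0/100\le b'\le 1/(100\vartheta_0)$ then yields $\|\tilde g\|_{H^1_k(\R)}\sim \|g\|_{H^1_k(\R)}$ with constants depending only on $\vartheta_0$.

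Next, in the integral defining $S_{k,w,\epsilon}g(v)$ I would substitute $v'=b(z)$, set $w=b(y_0)$, and use $\mathcal{G}_k(b(y),b(z))=G_k(y,z)$, $\Psi(b(y))=\varphi(y)$, together with the chain-rule identity $\partial_{v'}B(v')\,dv'=b''(z)\,dz$ (which follows from differentiating $B(b(z))=b'(z)$). This yields the key intertwining relation
\[
S_{k,w,\epsilon}g(b(y))=T_{k,y_0,\epsilon}\tilde g(y),\qquad y\in[0,1].
\]
Because $S_{k,w,\epsilon}g$ is supported in $\mathrm{supp}\,\Psi\subset(\lv,\uv)$ and $T_{k,y_0,\epsilon}\tilde g$ is supported in $\mathrm{supp}\,\varphi\subset(0,1)$, the change of variables $v=b(y)$ also gives the norm equivalence $\|S_{k,w,\epsilon}g\|_{H^1_k(\R)}\sim \|T_{k,y_0,\epsilon}\tilde g\|_{H^1_k(\R)}$.

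The first estimate in \eqref{F16.3} is then immediate by composing these equivalences with Lemma~\ref{T2}. For the second estimate, I would split the $H^1_k(\R)$ norm into the part supported on $[0,1]$ (resp.\ $[\lv,\uv]$) and its complement; since $T_{k,y_0,\epsilon}\tilde g$ vanishes outside $[0,1]$ and $S_{k,w,\epsilon}g$ vanishes outside $[\lv,\uv]$, I obtain
\[
\|\tilde g+T_{k,y_0,\epsilon}\tilde g\|_{H^1_k(\R)}^2=\|\tilde g+T_{k,y_0,\epsilon}\tilde g\|_{H^1_k([0,1])}^2+\|\tilde g\|_{H^1_k(\R\setminus[0,1])}^2,
\]
which under the change of variables transfers to $\|g+S_{k,w,\epsilon}g\|_{H^1_k(\R)}^2$. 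Combining with Lemma~\ref{T5} applied to $\tilde g$ and the norm equivalence $\|\tilde g\|_{H^1_k(\R)}\sim \|g\|_{H^1_k(\R)}$ yields the desired lower bound. The main obstacle is purely bookkeeping---tracking how the $L^2$ and $H^1$ norms transform under $v=b(y)$ and ensuring that the various supports and extensions align---but no new analytic input beyond what is already contained in Lemmas~\ref{T2} and~\ref{T5} should be required.
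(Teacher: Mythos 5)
Your proposal is correct and takes exactly the same route as the paper: the paper's proof of Lemma~\ref{F16.1} is a one-sentence invocation of Lemmas~\ref{T2} and~\ref{T5} via the change of variables \eqref{F8}, and you have simply written out the bookkeeping. The intertwining identity $S_{k,w,\epsilon}g(b(y))=T_{k,y_0,\epsilon}\tilde g(y)$ and the norm equivalences under $v=b(y)$ (using that $b'$ is bounded above and below by \eqref{B}) are exactly the content compressed into the paper's proof. One small inaccuracy: the displayed Pythagorean splitting of the $H^1_k$ norm does not hold literally, since $\|\cdot\|_{H^1_k}$ is defined as a sum $\|g\|_{L^2}+|k|^{-1}\|g'\|_{L^2}$ rather than a square root of a sum of squares; but passing to the equivalent norm $\left(\|g\|_{L^2}^2+|k|^{-2}\|g'\|_{L^2}^2\right)^{1/2}$, or simply observing directly that $\tilde g+T\tilde g$ is the pullback of $g+S_{k,w,\epsilon}g$ under $\bar b$ (both on $[0,1]$ and off it), repairs this without changing the argument.
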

\begin{proof}
The lemma follows from Lemma \ref{T2} and Lemma \ref{T5}, in view of the change of the variables formula \eqref{F8}.
\end{proof}

For applications below, we also need a slightly reformulated version of Lemma \ref{F16.1}.
\begin{lemma}\label{F16.4}
Set for any $g\in H^1_k(\R)$ and $\epsilon\in[-1/4,1/4]\backslash\{0\}, k\in\mathbb{Z}\backslash\{0\}, w\in[\lv,\uv]$,
\begin{equation}\label{F16.5}
S'_{k,w,\epsilon}g(v):=\int_{\R}\Psi(v+w)\mathcal{G}_k(v+w,v'+w)\partial_{v'}B(v'+w)\frac{g(v')}{v'+i\epsilon}dv'.
\end{equation}
Then for sufficiently small nonzero $\epsilon$, the following bounds hold
\begin{equation}\label{F16.6}
\|S'_{k,w,\epsilon}g\|_{H^1_k(\R)} \lesssim|k|^{-1/3} \|g\|_{H^1_k(\R)}\qquad {\rm and}\qquad\|g\|_{H^1_k(\R)}\lesssim\|g+S'_{k,w,\epsilon}g\|_{H^1_k(\R)}
\end{equation}
for all $k\in\mathbb{Z}\backslash\{0\},w\in[\lv,\uv]$ and $g\in H^1(\R)$.

\end{lemma}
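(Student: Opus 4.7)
The plan is to derive Lemma \ref{F16.4} directly from Lemma \ref{F16.1} via the translation $v \mapsto v + w$. Translation is an isometry of $H^1_k(\R)$, so once the operators $S_{k,w,\epsilon}$ and $S'_{k,w,\epsilon}$ are related by a translation the bounds transfer without loss.

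Concretely, for a given $g \in H^1_k(\R)$ set $\tg(v') := g(v'-w)$. The substitution $u = v'-w$ in the integral defining $S_{k,w,\epsilon}\tg$ gives
\begin{equation*}
S_{k,w,\epsilon}\tg(v) = \int_{\R}\Psi(v)\,\mathcal{G}_k(v, u+w)\,\partial_u B(u+w)\,\frac{g(u)}{u+i\epsilon}\,du.
\end{equation*}
Evaluating at $v+w$ instead of $v$ then yields exactly the defining formula of $S'_{k,w,\epsilon}$:
\begin{equation*}
(S_{k,w,\epsilon}\tg)(v+w) = S'_{k,w,\epsilon}g(v).
\end{equation*}
I would record this identity as the single nontrivial computation of the proof.

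Next I would use translation invariance of the $H^1_k(\R)$ norm, namely $\|h(\cdot + w)\|_{H^1_k(\R)} = \|h\|_{H^1_k(\R)}$ for any $h \in H^1_k(\R)$, together with $\|\tg\|_{H^1_k(\R)} = \|g\|_{H^1_k(\R)}$. Applying Lemma \ref{F16.1} to $\tg$ gives
\begin{equation*}
\|S'_{k,w,\epsilon}g\|_{H^1_k(\R)} = \|S_{k,w,\epsilon}\tg\|_{H^1_k(\R)} \lesssim |k|^{-1/3}\|\tg\|_{H^1_k(\R)} = |k|^{-1/3}\|g\|_{H^1_k(\R)},
\end{equation*}
which is the first bound in \eqref{F16.6}.

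For the second bound, note that the identity above yields
\begin{equation*}
(\tg + S_{k,w,\epsilon}\tg)(v+w) = g(v) + S'_{k,w,\epsilon}g(v),
\end{equation*}
so translation invariance gives $\|\tg + S_{k,w,\epsilon}\tg\|_{H^1_k(\R)} = \|g + S'_{k,w,\epsilon}g\|_{H^1_k(\R)}$. The coercivity statement in \eqref{F16.3} applied to $\tg$ then gives
\begin{equation*}
\|g\|_{H^1_k(\R)} = \|\tg\|_{H^1_k(\R)} \lesssim \|\tg + S_{k,w,\epsilon}\tg\|_{H^1_k(\R)} = \|g + S'_{k,w,\epsilon}g\|_{H^1_k(\R)},
\end{equation*}
which finishes the second bound. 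There is no genuine obstacle here; the only small subtlety is making sure the shift is applied consistently to both the weight $\Psi$, the kernel $\mathcal{G}_k$, and the coefficient $\partial B$, all of which is accomplished automatically by the single substitution above.
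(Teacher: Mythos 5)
Your proposal is correct and is exactly the argument the paper has in mind; the paper's own proof is the one-line remark ``by a shift of variables $v\to v-w$,'' and you have simply made that shift explicit by verifying the conjugation identity $(S_{k,w,\epsilon}\tg)(v+w)=S'_{k,w,\epsilon}g(v)$ for $\tg(v'):=g(v'-w)$ and invoking translation invariance of the $H^1_k(\R)$ norm.
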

\begin{proof}
Lemma \ref{F16.4} follows directly from Lemma \ref{F16.1}, by a shift of variables $v\to v-w$. 
\end{proof}

\section{Gevrey bounds for generalized eigenfunctions}\label{gev}
In this section we study the regularity of the generalized eigenfunctions $\psi^{\iota}_{k,\epsilon}(y,y_0), y,y_0\in[0,1], \iota\in\{\pm\}$, see the definitions \eqref{F7} and \eqref{F6}. The main difficulty is the presence of singularity in the generalized eigenfunctions.  The essential idea is to suitably shift the variables so that the resulting functions become  smooth in one of the variables.  The procedure exactly captures the nature of the singular behavior of the generalized eigenfunctions, and is one of our main new observations.

The starting point is the equation \eqref{F7} for the generalized eigenfunctions $\psi^{\iota}_{k,\epsilon}(y,y_0), y,y_0\in[0,1], \iota\in\{\pm\}$, which can be reformulated as
\begin{equation}\label{H0}
\psi^{\iota}_{k,\epsilon}(y,y_0)+\int_0^1G_k(y,z)\frac{b''(z)}{b(z)-b(y_0)+i\iota\epsilon}\psi^{\iota}_{k,\epsilon}(z,y_0)dz=\int_0^1G_k(y,z)\frac{\omega_0^k(z)}{b(z)-b(y_0)+i\iota\epsilon}dz.
\end{equation}
By Lemma \ref{T2} and Lemma \ref{T5}, we have the following bounds for sufficiently small $\epsilon\neq0, y_0\in[0,1],k\in\mathbb{Z}\backslash\{0\}$:
\begin{equation}\label{H0.1}
\left\|\varphi(y)\psi^{\iota}_{k,\epsilon}(y,y_0)\right\|_{H^1_k(\R)}\lesssim |k|^{-1/3}\left\|\omega_0^k\right\|_{H^1_k(\R)}.
\end{equation}
The bounds \eqref{H0.1} are useful to control the low frequency components of the generalized eigenfunctions, and will be an important stepping stone in the treatment of high frequencies.

We begin with an observation on the case when $y_0\in(0,\vartheta_1/2)\cup(1-\vartheta_1/2,1)$.
\begin{lemma}\label{H1}
We have
\begin{equation}\label{F10.4}
\lim_{\epsilon\to0+}\Big[\psi^{-}_{k,\epsilon}(y,y_0)-\psi^{+}_{k,\epsilon}(y,y_0)\Big]\equiv 0, \qquad{\rm for}\,\,y_0\in[0,\vartheta_1/2]\cup[1-\vartheta_1/2,1].
\end{equation}
\end{lemma}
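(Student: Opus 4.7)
The plan is to exploit the geometric configuration that is hidden in the assumption $\vartheta_1 \in (0,\vartheta_0)$ together with the support assumptions on $\omega_0$ and $b''$. By \eqref{B} we have $\operatorname{supp} b'' \subseteq [\vartheta_0, 1-\vartheta_0]$, and by hypothesis $\operatorname{supp} \omega_0^k \subseteq [\vartheta_1, 1-\vartheta_1]$. Hence the combined support of $b''$ and $\omega_0^k$ is contained in $[\vartheta_1, 1-\vartheta_1]$. When $y_0 \in [0,\vartheta_1/2] \cup [1-\vartheta_1/2, 1]$, the point $y_0$ is separated from this set by a positive distance in $y$, and by the strict monotonicity bound $b'(y) \geq \vartheta_0/100$ in \eqref{B}, also by a positive distance in the $b$-image. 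Concretely, the quantity
\begin{equation*}
\delta_0 := \min\bigl\{b(\vartheta_1) - b(\vartheta_1/2),\ b(1-\vartheta_1/2) - b(1-\vartheta_1)\bigr\}
\end{equation*}
is strictly positive, and $|b(z) - b(y_0)| \geq \delta_0$ for every $z$ in the combined support and every $y_0$ in the prescribed range.

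Consequently, for $|\epsilon| \leq \delta_0/2$, the factor $\frac{1}{b(z) - b(y_0) + i\iota\epsilon}$ appearing both in the kernel of \eqref{H0} and in the forcing is a bounded, $\iota$-insensitive perturbation: it converges uniformly in $z$ to the real, finite limit $\frac{1}{b(z) - b(y_0)}$ as $\epsilon \to 0$, with modulus of continuity independent of $\iota \in \{+,-\}$. Hence, passing to the limit $\epsilon \to 0+$ in \eqref{H0}, both functions $\psi^+_{k,\epsilon}(\cdot,y_0)$ and $\psi^-_{k,\epsilon}(\cdot,y_0)$ satisfy the same limiting integral equation
\begin{equation*}
\psi^0_{k}(y,y_0) + \int_0^1 G_k(y,z)\frac{b''(z)}{b(z) - b(y_0)}\psi^0_{k}(z,y_0)\,dz = \int_0^1 G_k(y,z)\frac{\omega_0^k(z)}{b(z) - b(y_0)}\,dz,
\end{equation*}
which is a non-singular Fredholm equation in this regime.

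I would then invoke the uniform bounds \eqref{H0.1} (from Lemma \ref{T2} and Lemma \ref{T5}) to see that the families $\{\psi^{\iota}_{k,\epsilon}(\cdot, y_0)\}_{\epsilon}$ are uniformly bounded in $H^1_k(\R)$ for small $\epsilon$; together with dominated convergence in the integrand this lets me take $\epsilon \to 0+$ and extract a common limit $\psi^0_k(\cdot, y_0)$ solving the above non-singular equation. The solution of that equation is unique by the same limiting absorption bound \eqref{T6} applied with $\epsilon = 0$ (legal since now the kernel is non-singular). Independence of $\iota$ of the limit then forces the claimed identity $\lim_{\epsilon\to 0+}[\psi^-_{k,\epsilon}(y,y_0) - \psi^+_{k,\epsilon}(y,y_0)] = 0$.

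There is no real obstacle: the only thing one must be slightly careful about is that the limit can be taken in a topology strong enough to justify the passage to the limit inside the nonlocal integral operator. Since the kernel is smooth (away from the diagonal) and the singularity $1/(b(z)-b(y_0)+i\iota\epsilon)$ is uniformly bounded here by $1/\delta_0$, $H^1_k$ convergence (or even $L^2$ convergence, followed by elliptic regularity via \eqref{H0}) suffices, so the argument closes cleanly.
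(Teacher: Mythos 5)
Your proof is correct, but it follows a genuinely different route from the paper's. The paper works directly with the difference $h_k:=\lim_{\epsilon\to 0+}[\psi^-_{k,\epsilon}-\psi^+_{k,\epsilon}]$: because $y_0\notin{\rm supp}\,\omega_0^k\cup{\rm supp}\,b''$, the right-hand side of \eqref{F7} is $\iota$-independent in the limit, so $h_k$ solves the \emph{homogeneous} ODE \eqref{H2} with Dirichlet data; substituting $g_k=\frac{b''h_k}{b-b(y_0)}$ converts this into the embedded-eigenvalue identity \eqref{H4} for $L_k$ at $b(y_0)$, which \eqref{Sp} rules out, giving $g_k\equiv 0$ and hence $h_k\equiv 0$. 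You instead establish that each $\psi^{\iota}_{k,\epsilon}$ converges individually (via compactness from \eqref{H0.1} and uniqueness) to the unique solution of the non-singular limiting Fredholm equation, with uniqueness secured by extending the limiting absorption bound \eqref{T6} to $\epsilon=0$ — a valid step here since $T_{k,y_0,\epsilon}\to T_{k,y_0,0}$ in operator norm when $y_0$ is separated from ${\rm supp}\,b''$, so the uniform lower bound of Lemma \ref{T5} passes to the limit. Both arguments rest on the same support-separation observation and, ultimately, on the absence of embedded eigenvalues (since Lemma \ref{T5} is proved by contradiction with \eqref{Sp}); the paper's treatment of the difference is shorter and sidesteps convergence of each $\psi^\iota$ separately, whereas your route yields that convergence as a byproduct. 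One gap worth closing explicitly: \eqref{T6} controls the cutoff operator $T_{k,y_0,\epsilon}$ (with the $\varphi(y)$ prefactor), not the bare integral operator appearing in \eqref{H0}, so transferring uniqueness requires invoking $\varphi\equiv 1$ on ${\rm supp}\,b''$ — the same device used in the proof of Lemma \ref{T5}.
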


\begin{proof}
We use the equation \eqref{F7}. Set for $y,y_0\in[0,1], k\in\mathbb{Z}\backslash\{0\}$,
\begin{equation}\label{H1.1}
h_k(y,y_0):=\lim_{\epsilon\to0+}\Big[\psi^{-}_{k,\epsilon}(y,y_0)-\psi^{+}_{k,\epsilon}(y,y_0)\Big].
\end{equation}
Note that for $y_0\in[0,\vartheta_1/2]\cup[1-\vartheta_1/2,1]$, $y_0\not\in{\rm supp}\,\omega_0^k\cup{\rm supp}\,b''$, which is contained in $[\vartheta_1,1-\vartheta_1]$. Therefore for $y_0\in(0,\vartheta_1/2)\cup(1-\vartheta_1/2,1)$, $h_k(y,y_0)$ satisfies
\begin{equation}\label{H2}
-k^2h_k(y,y_0)+\frac{d^2}{dy^2}h_k(y,y_0)-\frac{b''(y)}{b(y)-b(y_0)}h_k(y,y_0)=0,\qquad{\rm for}\,\,y\in(0,1),
\end{equation}
with $h_k(0,y_0)=h_k(1,y_0)=0$. Now set
\begin{equation}\label{H3}
g_k(y,y_0):=\frac{b''(y)}{b(y)-b(y_0)}h_k(y,y_0),\qquad {\rm for}\,\,y_0\in[0,\vartheta_1/2]\cup[1-\vartheta_1/2,1].
\end{equation}
It is clear that $g_k(\cdot,y_0)\in L^2(\R)$ for $y_0\in[0,\vartheta_1/2]\cup[1-\vartheta_1/2,1]$. It follows from \eqref{H2} that
\begin{equation}\label{H4}
(b(y)-b(y_0))g_k(y,y_0)+b''(y)\int_0^1G_k(y,z)g_k(z,y_0)\,dz=0,\qquad {\rm for}\,\,y\in[0,1],
\end{equation}
which implies that $g_k(y,y_0)\equiv 0$ for $y_0\in [0,\vartheta_1/2]\cup[1-\vartheta_1/2,1]$, in view of the spectral assumption that $b(y_0)$ is not an embedded eigenvalue for $L_k$. Then \eqref{H2} implies that $h_k(y,y_0)\equiv0$ for $y_0\in [0,\vartheta_1/2]\cup[1-\vartheta_1/2,1]$. The lemma is  proved.
\end{proof}


We now turn to the main case when $y_0\in[\vartheta_1/2,1-\vartheta_1/2]$.
Recall the change of variables \eqref{F8}, and set for $k\in\mathbb{Z}\backslash\{0\}, \iota\in\{\pm\}, y,y_0\in[0,1]$ and sufficiently small $\epsilon\neq0$,
\begin{equation}\label{F9}
\phi^{\iota}_{k,\epsilon}(v,w):=\psi^{\iota}_{k,\epsilon}(y,y_0),\qquad f_0^k(v):=\omega_0^k(y).
\end{equation}

We need the following structural bound on the Green's function $\mathcal{G}_k$.
\begin{lemma}\label{F18}
Define the localized Green's function $\mathcal{G}_k^{\ast}$ as
\begin{equation}\label{F19}
\mathcal{G}^{\ast}_k(v,w):=\Psi(v)\mathcal{G}_k(v,w)\Psi(w),\qquad {\rm for}\,\,v,w\in\R. 
\end{equation}
Then for some $\delta_0:=\delta_0(\vartheta_1,\lambda,s)>0$, we have the bounds
\begin{equation}\label{F20}
\big|\widetilde{\mathcal{G}_k^{\ast}}(\xi,\eta)\big|\lesssim \frac{e^{-\delta_0\langle\xi+\eta\rangle^{(s+1)/2}}}{k^2+|\eta|^2},\qquad{\rm for}\,\,\xi,\eta\in\R.
\end{equation}
\end{lemma}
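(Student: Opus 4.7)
The strategy is to exploit the coordinate shift $(v,w) \mapsto (v-w, w)$ mentioned in the introduction. Setting $H_k(v', w) := \mathcal{G}_k^{\ast}(v' + w, w)$, a straightforward change of variables in Fourier gives
\[
\widetilde{\mathcal{G}_k^{\ast}}(\xi, \eta) = \widetilde{H_k}(\xi, \xi+\eta),
\]
so the target decay in $\xi+\eta$ corresponds to Gevrey decay of $\widetilde{H_k}$ in its second argument. The heuristic is that, although $\mathcal{G}_k$ is singular on the diagonal $v=w$, the singularity of $H_k$ sits at $v'=0$ and is \emph{independent} of $w$, so $H_k$ is Gevrey-smooth in $w$ for each fixed $v' \neq 0$.

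First I would split $G_k$ via \eqref{eq:GreenFunction} into $G_k(y,z) = \frac{1}{2k}e^{-k|y-z|} + G_k^{\mathrm{bd}}(y,z)$, where $G_k^{\mathrm{bd}}$ collects the terms from expanding the $\sinh$ products that carry the boundary contribution. On $\mathrm{supp}(\varphi\otimes\varphi)$ one checks $|G_k^{\mathrm{bd}}| + k^{-1}|\partial_{y,z}G_k^{\mathrm{bd}}| \lesssim k^{-1}e^{-ck\vartheta_1}$; this piece contributes a function of $(v,w)$ which is Gevrey in both variables with an exponentially-small-in-$k$ prefactor, so its Fourier transform decays in both $\xi$ and $\eta$ and beats the target bound. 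The remaining diagonal piece is $\mathcal{G}_k^{\mathrm{main}}(v,w) = \frac{\Psi(v)\Psi(w)}{2k}e^{-k|\Phi(v)-\Phi(w)|}$, with $\Phi := b^{-1}$ Gevrey-$(1+s)/2$ on $\mathrm{supp}\,\Psi$ by the assumptions on $b$ and $\varphi$. Under the shift this becomes
\[
H_k(v', w) = \frac{\Psi(v'+w)\Psi(w)}{2k}\exp\!\bigl(-k\,\mathrm{sgn}(v')\,[\Phi(v'+w)-\Phi(w)]\bigr),
\]
where the absolute value has disappeared because $\Phi$ is monotone.

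Next, using the Gevrey algebra property together with the identity $\Phi(v'+w)-\Phi(w) = v'\int_0^1\Phi'(w+\tau v')\,d\tau$, which gives both $|\Phi(v'+w)-\Phi(w)| \ge c|v'|$ and smooth dependence on $w$, I would establish
\[
\left\|e^{\delta_0\langle\mu\rangle^{(1+s)/2}}\widehat{H_k}(v',\mu)\right\|_{L^2_\mu} \lesssim \frac{e^{-ck|v'|}}{k}
\]
for some $\delta_0 > 0$. Taking the Fourier transform in $v'$ and using $\int e^{-i\xi v'}e^{-ck|v'|}dv' = 2ck/(c^2k^2+\xi^2)$ then yields $|\widetilde{H_k}(\xi,\mu)| \lesssim e^{-\delta_0\langle\mu\rangle^{(1+s)/2}}/(k^2+\xi^2)$. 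A case split on whether $|\xi+\eta| \le |\eta|/2$ (in which case $|\xi| \asymp |\eta|$) or $|\xi+\eta| > |\eta|/2$ (where the Gevrey weight absorbs the mismatch) converts the denominator $k^2+\xi^2$ into the required $k^2+\eta^2$ and produces \eqref{F20}, possibly after relabelling $\delta_0$.

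The main obstacle will be proving the displayed $L^2_\mu$ bound for $\widehat{H_k}$ uniformly in $v'$. The difficulty is that each $\partial_w$ applied to the exponent yields a factor $k[\Phi'(v'+w)-\Phi'(w)]$, and the combinatorics of how these powers of $k$ interact with the Gevrey derivative growth $(n!)^{(1+s)/2}$ and the pointwise decay $e^{-ck|v'|}$ is delicate. The natural framework is to work directly on the Fourier side in $w$, expand the exponential series and absorb the $k$-powers into the weight $e^{\delta_0\langle\mu\rangle^{(1+s)/2}}$ using the Gevrey bound in \eqref{B1}, the exponent $(1+s)/2$ being precisely what makes this book-keeping close.
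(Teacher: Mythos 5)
Your overall plan -- shift the diagonal variable so the singularity of $\mathcal{G}_k$ lives in one coordinate while the other coordinate sees only a Gevrey-smooth dependence, split off the exponentially-small-in-$k$ boundary pieces of the Green's function, and then read off the $\langle\xi+\eta\rangle^{(1+s)/2}$ decay from the regular variable and the $(k^2+\cdot^2)^{-1}$ decay from the singular one -- is the paper's strategy in all but notation. The paper shifts the second variable, $w\mapsto v+w$, and records Gevrey smoothness in $v$ with pointwise decay $e^{-\mu|kw|}$ in the shifted variable (bounds \eqref{F23.1}), whereas you shift the first variable, $v=v'+w$, and keep Gevrey smoothness in $w$ with decay $e^{-ck|v'|}$; by the symmetry $\mathcal{G}_k(v,w)=\mathcal{G}_k(w,v)$ these are the same argument. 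The one structural cost of your orientation is the final case split converting $(k^2+\xi^2)^{-1}$ to $(k^2+\eta^2)^{-1}$, which the paper avoids because in its coordinates the decay variable is already dual to $\eta$; the case split does work, using $|k|\ge 1$ and a small loss in $\delta_0$.

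There is, however, a genuine gap at the step ``Taking the Fourier transform in $v'$ \dots{} then yields $|\widetilde{H_k}(\xi,\mu)|\lesssim e^{-\delta_0\langle\mu\rangle^{(1+s)/2}}/(k^2+\xi^2)$.'' A pointwise upper bound $|\widehat{H_k}(v',\mu)|\lesssim k^{-1}e^{-ck|v'|}e^{-\delta_0\langle\mu\rangle^{(1+s)/2}}$ only gives, after integrating in $v'$ with absolute values, the $\xi$-independent bound $\lesssim k^{-2}e^{-\delta_0\langle\mu\rangle^{(1+s)/2}}$; the identity $\int e^{-i\xi v'}e^{-ck|v'|}dv' = 2ck/(c^2k^2+\xi^2)$ exploits cancellation that is lost once you pass to moduli, and your $\widehat{H_k}(v',\mu)$ is \emph{not} of the separated form $e^{-ck|v'|}\cdot(\text{function of }\mu)$ -- the ``$c$'' in the exponent of $H_k$ is $F(v',w)=\int_0^1\Phi'(w+\tau v')\,d\tau$ and depends on both variables. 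To extract the $\xi^{-2}$ decay one must integrate by parts twice in $v'$ across the kink at $v'=0$: the first $v'$-derivative of $e^{-k|v'|F}$ jumps by $-2kF(0,w)$ at $v'=0$, so $\partial_{v'}^2 H_k$ has a delta at the origin whose coefficient must be tracked along with the absolutely integrable remainder. The paper does exactly this (``integration by parts in \eqref{F23} in the variable $w$ (twice)''), combining the zero-IBP bound $\lesssim k^{-2}$ with the two-IBP bound $\lesssim \eta^{-2}$ to get $(k^2+\eta^2)^{-1}$. Your proposal should make the analogous argument explicit. Relatedly, the intermediate estimate you formulate in $L^2_\mu$ should be an $L^\infty_\mu$ (pointwise-in-$\mu$) or $L^1_\mu$ statement, since the target \eqref{F20} is pointwise in both Fourier variables; the paper obtains pointwise bounds throughout. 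Finally, the ``main obstacle'' you flag -- balancing the powers of $k$ produced by differentiating $e^{-k|v'|F}$ against the Gevrey factorials and the $e^{-ck|v'|}$ decay -- is resolved in the paper by splitting into $|kv'|<1$ (direct Gevrey algebra for the composition) and $|kv'|>1$ (using that $a\mapsto e^{-\kappa a}$ is Gevrey regular uniformly in $\kappa\ge 1$ on $a\ge\delta$, cf.\ \eqref{F26}, and the Gevrey composition lemma); this is the mechanism you would need to carry out what you describe at the level of Fourier series.
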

We postpone the proof to subsection \ref{Gr} in the appendix, and proceed to present our main argument.\\

The following lemma contains the main estimates for the generalized eigenfunctions.
\begin{lemma}\label{S1}
Define for $\iota\in\{\pm\}, k\in\mathbb{Z}\backslash\{0\}$ and sufficiently small $\epsilon>0$ (so that Lemma \ref{F16.4} holds),
\begin{equation}\label{F17.1}
\Theta^{\iota}_{k,\epsilon}(v,w):=\Psi(v+w)\phi^{\iota}_{k,\epsilon}(v+w,w)\Psi(w), \qquad {\rm for}\,\,v,w\in\R.
\end{equation}

We have the following bounds
\begin{equation}\label{S2}
\left\|(|k|+|\xi|)e^{\lambda\langle k,\eta\rangle^s}\,\widetilde{\,\,\Theta^{\iota}_{k,\epsilon}}(\xi,\eta)\right\|_{L^2(\xi\in\R,\eta\in\R)}\lesssim \left\|e^{\lambda\langle k,\eta\rangle^s}\,\widetilde{f_0^k}(\eta)\right\|_{L^2(\eta\in\R)},
\end{equation}
and 
\begin{equation}\label{S3}
\sup_{\xi,\eta\in \R}\left|(|k|+|\xi|)^2e^{\lambda\langle k,\eta\rangle^s}\,\widetilde{\,\,\Theta^{\iota}_{k,\epsilon}}(\xi,\eta)\right|\lesssim \left\|e^{\lambda\langle k,\eta\rangle^s}\,\widetilde{f_0^k}(\eta)\right\|_{L^2(\eta\in\R)}.
\end{equation}
\end{lemma}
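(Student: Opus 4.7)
The plan is to convert \eqref{S2}--\eqref{S3} to a fixed-point equation for $\Theta^\iota_{k,\epsilon}(\cdot,w)$, invert it using the limiting absorption principle, and then propagate the Gevrey regularity from the data $f_0^k$ and the smooth kernel to $\Theta^\iota_{k,\epsilon}$. Starting from \eqref{H0} written in $(v,w)$-coordinates via \eqref{F9}--\eqref{F15}, I would shift $v\to v+w$, $v'\to v'+w$ inside the integral and multiply by $\Psi(v+w)\Psi(w)$. Because the support of $\partial_{v'}B(v'+w)$ lies inside the set where $\Psi(v'+w)\equiv 1$, the cutoff $\Psi(v'+w)$ may be inserted freely next to $\phi^\iota_{k,\epsilon}(v'+w,w)$ in the integrand, giving the fixed-point identity
\begin{equation*}
(I+S'_{k,w,\iota\epsilon})\big[\Theta^\iota_{k,\epsilon}(\cdot,w)\big](v) \;=\; \Psi(w)\,S'_{k,w,\iota\epsilon}\big[f_0^k(\cdot+w)\big](v),
\end{equation*}
with $S'_{k,w,\iota\epsilon}$ as in Lemma \ref{F16.4}. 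That lemma already yields the baseline bound $\|\Theta^\iota_{k,\epsilon}(\cdot,w)\|_{H^1_k(\R)}\lesssim \|f_0^k\|_{H^1_k(\R)}$, uniformly in $w$ and $\epsilon$.

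To obtain the Gevrey-$s$ control in $w$ claimed in \eqref{S2}, two structural facts should drive the analysis. First, the shifted source $f_0^k(v'+w)$ has $w$-Fourier transform $e^{iv'\eta}\widetilde{f_0^k}(\eta)$, so its Gevrey-$s$ modulus in $w$ coincides with the Gevrey-$s$ modulus of $f_0^k$ in $v$ supplied by \eqref{B2}. Second, the kernel $K(v,v',w):=\Psi(v+w)\mathcal{G}_k(v+w,v'+w)\partial_{v'}B(v'+w)$ is Gevrey-smooth in $w$ at the strictly larger index $(1+s)/2$, thanks to Lemma \ref{F18} on $\mathcal{G}_k^{\ast}$ together with \eqref{B1} and the hypothesis on $\varphi$; moreover $K$ inherits from $\mathcal{G}_k^{\ast}$ the $(k^2+|\xi|^2)^{-1}$ decay in the $v$-Fourier variable $\xi$ that produces the factor $|k|+|\xi|$ on the left of \eqref{S2}. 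I would then conjugate the fixed-point equation by the multiplier $M_\lambda:=e^{\lambda\langle k,D_w\rangle^s}$, obtaining
\begin{equation*}
(I+S'_{k,w,\iota\epsilon})\big[M_\lambda\Theta^\iota_{k,\epsilon}(\cdot,w)\big] \;=\; M_\lambda\big[\Psi(w)S'_{k,w,\iota\epsilon}f_0^k(\cdot+w)\big] + [S'_{k,w,\iota\epsilon},M_\lambda]\Theta^\iota_{k,\epsilon}(\cdot,w),
\end{equation*}
and invert the left-hand side using Lemma \ref{F16.4} again. The main obstacle is to show that the commutator term is a genuine lower-order perturbation that can be absorbed on the left. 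The strictly larger Gevrey index $(1+s)/2>s$ of the $w$-dependence of $K$, combined with the $|k|^{-1/3}$ operator-norm gain in Lemma \ref{F16.4}, should provide the needed slack; concretely, a high/low frequency split in $\eta$ seems appropriate — low $\eta$-modes handled via the qualitative $H^1_k$ resolvent bound, and high $\eta$-modes treated perturbatively using the Gevrey smoothness of $K$, in the spirit of the Fourier-side commutator arguments announced in the introduction.

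Finally, for \eqref{S3} I would return to the integral form $\Theta^\iota_{k,\epsilon}=-S'_{k,w,\iota\epsilon}\Theta^\iota_{k,\epsilon}+\Psi(w)S'_{k,w,\iota\epsilon}f_0^k(\cdot+w)$. The pointwise $(k^2+|\xi|^2)^{-1}$ bound of Lemma \ref{F18} applied to $\mathcal{G}_k^{\ast}$, together with the boundedness of the $\xi$-Fourier symbol of $1/(v+i\iota\epsilon)$, extracts the additional factor of $|k|+|\xi|$ relative to \eqref{S2}; the remaining $L^2$-integrand in $\eta$ is then controlled via Cauchy--Schwarz against the already-proved \eqref{S2}, yielding the pointwise bound \eqref{S3}.
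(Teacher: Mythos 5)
Your proposal is correct and follows essentially the same route as the paper's proof: shifting to $(v+w,w)$ coordinates, conjugating the fixed-point equation by the Gevrey multiplier $A_k=e^{\lambda\langle k,D_w\rangle^s}$, inverting via Lemma \ref{F16.4}, absorbing the commutator using the gap $(1+s)/2>s$ between the kernel and data Gevrey indices, and obtaining \eqref{S3} by Cauchy--Schwarz against \eqref{S2}. The only omitted details are the $\Psi_0(w)$ localization needed because Lemma \ref{F16.4} only applies for $w\in[\underline{v},\overline{v}]$ while $A_k$ spreads the $w$-support (the paper's Claim \ref{claim3}), and the truncated weight $A_k^\rho$ used to remove the a priori finiteness assumption; neither affects the validity of the approach.
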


\begin{proof}
We first note, using \eqref{H0.1} and the definitions \eqref{F17.1}, the following bounds 
\begin{equation}\label{F17.3}
\left\|(|k|+|\partial_v|)\Theta^{\iota}_{k,\epsilon}\right\|_{L^2_vL^2_w}\lesssim \langle k\rangle \left\|f_0^k\right\|_{H^3},
\end{equation}
which is useful to control the low frequency components of $\Theta^{\iota}_{k,\epsilon}$.

We first present the proof of \eqref{S2}-\eqref{S3} under the assumption that the left hand side of \eqref{S2} is finite, for the sake of clarity of the main ideas. We shall indicate in the end of the section how to remove this qualitative assumption by using approximate weights to $A_k$. 

We divide the rest of the proof into several steps.

{\bf Step 1} In this step we derive the main equations for $\Theta^{\iota}_{k,\epsilon}(v,w)$.
From the definitions \eqref{F9} and the equations \eqref{F7}, it follows that $\phi^{\iota}_{k,\epsilon}(v,w)$ satisfies for $v,w\in[\lv,\uv]$,
\begin{equation}\label{F10}
\begin{split}
&k^2\phi^{\iota}_{k,\epsilon}(v,w)-B^2(v)\partial_v^2\phi^{\iota}_{k,\epsilon}(v,w)-B(v)\partial_vB(v)\partial_v\phi^{\iota}_{\epsilon}(v,w)\\
&+B(v)\partial_vB(v)\frac{\phi^{\iota}_{k,\epsilon}(v,w)}{v-w+i\iota\epsilon}=\frac{f_0^k(v)}{v-w+i\iota \epsilon}.
\end{split}
\end{equation}

Using the definitions of $\mathcal{G}_k$, see \eqref{F15}-\eqref{F16}, with localization in $v$, we can reformulate equation \eqref{F10} as
\begin{equation}\label{F17}
\begin{split}
&\Psi(v)\phi^{\iota}_{k,\epsilon}(v,w)+\int_{\mathbb{R}}\Psi(v)\mathcal{G}_k(v,v')\partial_{v'}B(v')\frac{\Psi(v')\phi^{\iota}_{k,\epsilon}(v',w)}{v'-w+i\iota\epsilon}\,dv'\\
&=\int_{\mathbb{R}}\Psi(v)\mathcal{G}_k(v,v')\frac{1}{B(v')}\frac{f_0^k(v')}{v'-w+i\iota\epsilon}\,dv',\qquad{\rm for}\,\,v\in\R, w\in[\lv,\uv].
\end{split}
\end{equation}
In the above we used the fact that $\Psi\equiv 1$ on the support of $\partial_vB$.
Hence $\Theta^{\iota}_{k,\epsilon}(v,w)$ satisfies the more regular (in $w$) equation
\begin{equation}\label{F17.2}
\begin{split}
&\Theta^{\iota}_{k,\epsilon}(v,w)+\int_{\mathbb{R}}\Psi(v+w)\mathcal{G}_k(v+w,v'+w)\partial_{v'}B(v'+w)\frac{\Theta^{\iota}_{k,\epsilon}(v',w)}{v'+i\iota\epsilon}\,dv'\\
&=\int_{\mathbb{R}}\Psi(v+w)\mathcal{G}_k(v+w,v'+w)\Psi(w)\frac{1}{B(v'+w)}\frac{f_0^k(v'+w)}{v'+i\iota\epsilon}\,dv',\qquad{\rm for}\,\,v,w\in\R.
\end{split}
\end{equation}

{\bf Step 2} We now study the regularity of $\Theta^{\iota}_{k,\epsilon}$ using equation \eqref{F17.2}. Define the Fourier multiplier $A_k$ as
\begin{equation}\label{F24}
\widetilde{\,\,A_kh\,\,}(\eta):=e^{\lambda\langle k,\eta\rangle^s}\,\widetilde{\,\,h\,\,}(\eta),\qquad{\rm for\,\,any\,\,}h\in L^2(\R).
\end{equation}
The basic idea is to use the limiting absorption principle, see Lemma \ref{F16.4}, to bound $\Theta^{\iota}_{k,\epsilon}$. We note that $\Theta^{\iota}_{k,\epsilon}$ is very smooth in $w$ but not so in $v$, due to the presence of the singular factor $1/(v+i\iota\epsilon)$. In order to prove Gevrey regularity of $\Theta^{\iota}_{k,\epsilon}$ in $w$, we apply the operator $A_k$, which acts on the variable $w$, to equation \eqref{F17.2} and obtain
\begin{equation}\label{F25}
\begin{split}
&A_k\Theta^{\iota}_{k,\epsilon}(v,w)+\int_{\mathbb{R}}\mathcal{G}^{\ast}_k(v+w,v'+w)\partial_{v'}B(v'+w)\frac{A_k\Theta^{\iota}_{k,\epsilon}(v',w)}{v'+i\iota\epsilon}\,dv'\\
&=A_k\left[\int_{\mathbb{R}}\mathcal{G}^{\ast}_k(v+\cdot,v'+\cdot)\Psi(\cdot)\frac{1}{B(v'+\cdot)}\frac{f_0^k(v'+\cdot)}{v'+i\iota\epsilon}\,dv'\right](w)+\mathcal{C}^{\iota}_{k,\epsilon}(v,w)\\
&:=F^{\iota}_{k,\epsilon}(v,w)+\mathcal{C}^{\iota}_{k,\epsilon}(v,w),
\end{split}
\end{equation}
for $v,w\in\R$, where the commutator term $\mathcal{C}^{\iota}_{k,\epsilon}(v,w)$ is defined as
\begin{equation}\label{S26}
\begin{split}
\mathcal{C}^{\iota}_{k,\epsilon}(v,w)&:=\int_{\mathbb{R}}\mathcal{G}^{\ast}_k(v+w,v'+w)\partial_{v'}B(v'+w)\frac{A_k\Theta^{\iota}_{k,\epsilon}(v',w)}{v'+i\iota\epsilon}\,dv'\\
&-A_k\left[\int_{\mathbb{R}}\mathcal{G}^{\ast}_k(v+\cdot,v'+\cdot)\partial_{v'}B(v'+\cdot)\frac{\Theta^{\iota}_{k,\epsilon}(v',\cdot)}{v'+i\iota\epsilon}\,dv'\right](w).
\end{split}
\end{equation}

Now fix a smooth cutoff function $\Psi_0$ with ${\rm supp}\,\Psi_0\subseteq (\lv,\uv)$, $\Psi_0\equiv 1$ on the support of $\Psi$, and 
\begin{equation}\label{F25.1}
\left|\widetilde{\,\Psi_0}(\xi)\right|\lesssim e^{-\langle \xi\rangle^{(1+s)/2}},\qquad {\rm for}\,\,\xi\in\R.
\end{equation}

Applying Lemma \ref{F16.4} for each $w$ and taking $L^2$ in $w$, we obtain from \eqref{F25} 
\begin{equation}\label{F26.1}
\left\|\Psi_0(w)(|k|+|\partial_v|)A_k\Theta^{\iota}_{k,\epsilon}(v,w)\right\|_{L^2_vL^2_w}\lesssim \left\|(|k|+|\partial_v|)F^{\iota}_{k,\epsilon}\right\|_{L^2_vL^2_w}+\left\|(|k|+|\partial_v|)\mathcal{C}^{\iota}_{k,\epsilon}\right\|_{L^2_vL^2_w}.
\end{equation}


{\bf Step 3} In this step we bound the terms on the right hand side of \eqref{F26.1}, and estimate $A_k\Theta^{\iota}_{k,\epsilon}(v,w)$ using $\Psi_0(w)A_k\Theta^{\iota}_{k,\epsilon}(v,w)$. More precisely,  we claim that
\begin{claim}\label{claim1}
The term $F^{\iota}_{k,\epsilon}(v,w)$ satisfies the bounds
\begin{equation}\label{F28}
\left\|(|k|+|\partial_v|)F^{\iota}_{k,\epsilon}(v,w)\right\|_{L^2(\R\times\R)}\lesssim \left\|A_kf_0^k\right\|_{L^2(\R)},
\end{equation}
\begin{equation}\label{F28.1}
\sup_{\xi,\eta\in\R}\left[(|k|^2+|\xi|^2)A_k(\eta)\big|\widetilde{F^{\iota}_{k,\epsilon}}(\xi,\eta)\big|\right]\lesssim \left\|A_kf_0^k\right\|_{L^2(\R)}.
\end{equation}
\end{claim}

\begin{claim}\label{claim2}
The term $\mathcal{C}^{\iota}_{k,\epsilon}(v,w)$ satisfies the bounds for any $\beta\in(0,1)$
\begin{equation}\label{F29}
\begin{split}
\|(|k|+|\partial_v|)\mathcal{C}^{\iota}_{k,\epsilon}(v,w)\|_{L^2_vL^2_w(\R\times\R)}\lesssim& \beta\big\|(|k|+|\partial_v|)A_k\Theta^{\iota}_{k,\epsilon}(v,w)\big\|_{L^2_vL^2_w(\R\times\R)}\\
                         &+C_{\beta} \big\|(|k|+|\partial_v|)\Theta^{\iota}_{k,\epsilon}(v,w)\big\|_{L^2_vL^2_w(\R\times\R)}.
\end{split}
\end{equation}
\end{claim}

\begin{claim}\label{claim3}
We have the following bounds
\begin{equation}\label{F100}
\left\|(|k|+|\partial_v|)A_k\Theta^{\iota}_{k,\epsilon}(v,w)\right\|_{L^2_vL^2_w}\lesssim \left\|\Psi_0(w)(|k|+|\partial_v|)A_k\Theta^{\iota}_{k,\epsilon}(v,w)\right\|_{L^2_vL^2_w}+\left\|(|k|+|\partial_v|)\Theta^{\iota}_{k,\epsilon}\right\|_{L^2_vL^2_w}.
\end{equation}
\end{claim}

We postpone the proofs of Claim \ref{claim1} to Claim \ref{claim3} to the end of the section.

{\bf Step 4} We now complete the proof of \eqref{S2} using the bounds \eqref{F26.1}-\eqref{F100}. Indeed, we obtain from \eqref{F26.1}-\eqref{F100} that
\begin{equation}\label{F37}
\begin{split}
\left\|(|k|+|\partial_v|)A_k\Theta^{\iota}_{k,\epsilon}(v,w)\right\|_{L^2_vL^2_w}\lesssim& \left\|A_kf_0^k\right\|_{L^2(\R)}+ \beta\Big\|(|k|+|\partial_v|)A_k\Theta^{\iota}_{k,\epsilon}(v,w)\Big\|_{L^2_vL^2_w}\\
                         &+C_{\beta} \Big\|(|k|+|\partial_v|)\Theta^{\iota}_{k,\epsilon}(v,w)\Big\|_{L^2_vL^2_w(\R\times\R)}.
\end{split}
\end{equation}
Choose $\beta\in(0,1)$ sufficiently small, and use the low frequency bounds \eqref{F17.3}, \eqref{S2} then follows. 

{\bf Step 5} We now prove the bounds \eqref{S3}. In view of \eqref{F17.2} and \eqref{F28.1}, \eqref{S3} follows from 
\begin{equation}\label{F37.1}
\begin{split}
&(k^2+\xi^2)A_k(\eta)\left|\int_{\R^3}\widetilde{\,\mathcal{G}_k^{\ast}}(\xi,\zeta) \widetilde{\partial_vB}(\alpha) \widetilde{\,\,\Theta^{\iota}_{k,\epsilon}}(-\zeta-\gamma-\alpha,\eta-\xi-\zeta-\alpha)\,d\alpha d\zeta d\gamma\right|\\
&\lesssim A_k(\eta)\int_{\R^3}e^{-0.6\delta_0\langle \xi+\zeta,\alpha\rangle^{(1+s)/2}}\left|\widetilde{\,\,\Theta^{\iota}_{k,\epsilon}}(-\zeta-\gamma-\alpha,\eta-\xi-\zeta-\alpha)\right|\,d\alpha d\zeta d\gamma\\
&\lesssim \int_{\R^3}e^{-0.5\delta_0\langle \xi+\zeta,\alpha\rangle^{(1+s)/2}}A_k(\eta-\xi-\zeta-\alpha)\left|\widetilde{\,\,\Theta^{\iota}_{k,\epsilon}}(-\zeta-\gamma-\alpha,\eta-\xi-\zeta-\alpha)\right|\,d\alpha d\zeta d\gamma\\
&\lesssim \|(|k|+|\partial_v|)A_k\Theta^{\iota}_{k,\epsilon}\|_{L^2_vL^2_w}\lesssim\left\|A_k(\eta)\widetilde{f_0^k}(\eta)\right\|_{L^2(\eta\in\R)}.
\end{split}
\end{equation}
In the above, $\delta_0:=\delta_0(\vartheta_1)>0$, and we used the elementary point-wise inequality \eqref{F31.4}. See \eqref{F34}-\eqref{F35} for related computations.
 The proof of Lemma \ref{S1} is now complete.
\end{proof}

We now present the proof of Claim \ref{claim1} through Claim \ref{claim3}. 

We use the following elementary inequalities: if $a, b\in\R^n$ and $\beta\in(0,1)$ then
\begin{equation}\label{b>a}
\langle b\rangle\ge \beta\langle a-b\rangle\qquad {\rm implies}\qquad \langle a\rangle^{s}\leq \langle b\rangle^{s}+(1-\mu)\langle a-b\rangle^{s},
\end{equation}
for some $\mu:=\mu(\beta,s)>0$, and the point-wise bounds for any $\zeta,\eta\in\R$:
\begin{equation}\label{P3.2}
\left|e^{\lambda\langle k,\eta\rangle^s}-e^{\lambda\langle k,\eta-\zeta\rangle^s}\right|\lesssim_{\lambda,s}\langle k, \eta-\zeta\rangle^{s-1}e^{\lambda(\langle k,\eta-\zeta\rangle^s+\langle \zeta\rangle^s)}.
\end{equation}
\eqref{P3.2} can be proved by considering the cases $\langle\zeta\rangle\ge\langle k, \eta-\zeta\rangle/4$ and $\langle\zeta\rangle\leq\langle k,\eta-\zeta\rangle/4$, using also \eqref{b>a}.

\begin{proof}[Proof of Claim \ref{claim1}]
Using \eqref{F25} and taking Fourier transform in $v,w$, we obtain that
\begin{equation}\label{F30}
\begin{split}
\widetilde{F^{\iota}_{k,\epsilon}}(\xi,\eta)&=CA_k(\eta)\int_{\R^4}\widetilde{\mathcal{G}_k^{\ast}}(\xi,\zeta)e^{-iw\eta+i\xi w+i\zeta(v'+w)}\Psi(w)\frac{f_0^k(v'+w)}{B(v'+w)} e^{i(v'+i\iota\epsilon)\gamma}\mathbf{1}_{\iota\gamma>0}\,dv'dw d\zeta d\gamma\\
&=CA_k(\eta)\int_{\R^2}\widetilde{\mathcal{G}_k^{\ast}}(\xi,\zeta)\widetilde{h_k}(-\zeta-\gamma,\eta-\xi-\zeta)e^{-\epsilon\iota\gamma}\mathbf{1}_{\iota\gamma>0}\,d\zeta d\gamma.
\end{split}
\end{equation}
In the above, we have set
\begin{equation}\label{F31}
h_k(v,w):=\Psi(w)f_0^k(v+w)/B(v+w),\qquad {\rm for}\,\,v,w\in\R.
\end{equation}
Since $\Psi\equiv 1$ on the support of $f_0^k$, we can write
\begin{equation}\label{F31.1}
h_k(v,w)=\Upsilon(v,w)f_0^k(v+w), \qquad{\rm where}\,\,\Upsilon(v,w):=\Psi(w)\Psi(v+w)/B(v+w).
\end{equation}
Using general properties of Gevrey spaces, see Lemma \ref{lm:Gevrey} and Lemma \ref{GPF}, and the regularity of $b$, see \eqref{B1}, we obtain that for some $\delta_0=\delta_0(\vartheta_1)>0$ (with a slight abuse of notation, see \eqref{F20}),
\begin{equation}\label{F31.2}
\big|\widetilde{\,\Upsilon\,}(\xi,\eta)\big|\lesssim e^{-\delta_0\langle\xi,\eta\rangle^{(s+1)/2}},\qquad {\rm for}\,\,\xi,\eta\in\R.
\end{equation}
Therefore,
\begin{equation}\label{F31.3}
\big|\widetilde{\,h_k}(\xi,\eta)\big|\lesssim \int_{\R}e^{-\delta_0\langle \xi-\alpha,\eta-\alpha\rangle^{(1+s)/2}}\widetilde{\,f_0^k}(\alpha)\,d\alpha.
\end{equation}
Using the elementary inequality (which follows from \eqref{b>a})
\begin{equation}\label{F31.4}
A_k(\eta)\lesssim A_k(\eta-\alpha)e^{\lambda\langle\alpha\rangle^s},\qquad {\rm for}\,\,\alpha,\eta\in\R,
\end{equation}
we obtain that
\begin{equation}\label{F31.5}
A_k(\eta)\big|\widetilde{\,h_k}(\xi,\eta)\big|\lesssim  \int_{\R}e^{-0.5\delta_0\langle \xi-\alpha,\eta-\alpha\rangle^{(1+s)/2}}A_k(\alpha)\widetilde{\,f_0^k}(\alpha)\,d\alpha.
\end{equation}
Thus in view of \eqref{F20} and \eqref{F30}, \eqref{F31.5} implies that
\begin{equation}\label{F32}
\begin{split}
&(k^2+\xi^2)\big|\widetilde{F^{\iota}_{k,\epsilon}}(\xi,\eta)\big|\lesssim \int_{\R^2}e^{-0.6\delta_0\langle \xi+\zeta\rangle^{(s+1)/2}}A_k(\eta-\xi-\zeta)\left|\widetilde{h_k}(-\zeta-\gamma,\eta-\xi-\zeta)\right|d\zeta d\gamma\\
&\lesssim \int_{\R^2}e^{-0.1\delta_0\langle \xi+\zeta,\eta-\alpha\rangle^{(s+1)/2}}A_k(\alpha)\big|f_0^k(\alpha)\big|\,d\zeta d\alpha\lesssim \int_{\R}e^{-0.1\delta_0\langle \eta-\alpha\rangle^{(s+1)/2}}A_k(\alpha)\big|\widetilde{f_0^k}(\alpha)\big|\, d\alpha.
\end{split}
\end{equation}
The desired bounds \eqref{F28} and \eqref{F28.1} then follow from \eqref{F32}. 

\end{proof}

\begin{proof}[Proof of Claim \ref{claim2}]
 It follows from \eqref{S26} that
\begin{equation}\label{F34}
\begin{split}
&\widetilde{\mathcal{C}^{\iota}_{k,\epsilon}}(\xi,\eta)\\
&:=C\bigg[\int_{\R^4}\widetilde{\mathcal{G}_k^{\ast}}(\xi,\zeta)e^{-i\eta w+i\xi w+i\zeta(v'+w)}\partial_{v'}B(v'+w)A_k\Theta^{\iota}_{k,\epsilon}(v',w)e^{i(v'+i\iota\epsilon)\gamma}\mathbf{1}_{\iota\gamma>0}\, dv' dw d\zeta d\gamma\\
&-A_k(\eta)\int_{\R^4}\widetilde{\mathcal{G}^{\ast}_k}(\xi,\zeta)e^{-i\eta w+i\xi w+i\zeta(v'+w)}\partial_{v'}B(v'+w)\Theta^{\iota}_{k,\epsilon}(v',w)e^{i(v'+i\iota\epsilon)\gamma}\mathbf{1}_{\iota\gamma>0}\, dv' dw d\zeta d\gamma\bigg]\\
&=C\int_{\R^3}\widetilde{\mathcal{G}^{\ast}_k}(\xi,\zeta)\widetilde{\partial_{v}B}(\alpha)\bigg[A_k(\eta-\xi-\zeta-\alpha)-A_k(\eta)\bigg]\widetilde{\,\,\Theta^{\iota}_{k,\epsilon}}(-\zeta-\gamma-\alpha,\eta-\xi-\zeta-\alpha)\,d\alpha d\zeta d\gamma.
\end{split}
\end{equation}
Thus by \eqref{F20} and \eqref{P3.2},
\begin{equation}\label{F35}
\begin{split}
&(k^2+\xi^2)\left|\widetilde{\mathcal{C}^{\iota}_{k,\epsilon}}(\xi,\eta)\right|\lesssim \int_{\R^3} e^{-0.6\delta_0\langle\xi+\zeta\rangle^{(s+1)/2}}e^{\lambda\langle\alpha\rangle^s}\left|\widetilde{\partial_{v}B}(\alpha)\right|\langle \eta-\xi-\zeta-\alpha \rangle^{s-1}\\
&\hspace{1in} \times A_k(\eta-\xi-\zeta-\alpha)\left|\widetilde{\,\,\Theta^{\iota}_{k,\epsilon}}(-\zeta-\gamma-\alpha,\eta-\xi-\zeta-\alpha)\right|\,d\alpha d\zeta d\gamma\\
&\lesssim \int_{\R^3}e^{-0.5\delta_0\langle \alpha,\zeta\rangle^{(1+s)/2}}\langle\eta-\zeta-\alpha\rangle^{s-1} A_k(\eta-\zeta-\alpha)\left|\widetilde{\,\,\Theta^{\iota}_{k,\epsilon}}(\gamma,\eta-\zeta-\alpha)\right|\,d\alpha d\zeta d\gamma.
 \end{split}
\end{equation}
As a consequence, 
\begin{equation}\label{F36}
\left\|(|k|+|\partial_v|)\mathcal{C}^{\iota}_{k,\epsilon}(v,w)\right\|_{L^2_vL^2_{w}}\lesssim \left\|(|k|+|\partial_v|)\langle\partial_w\rangle^{s-1} A_k\Theta^{\iota}_{k,\epsilon}\right\|_{L^2_vL^2_w},
\end{equation}
from which \eqref{F29} follows.

\end{proof}

\begin{proof}[Proof of Claim \ref{claim3}]
Define for $v,w\in\R$,
\begin{equation}\label{C32}
H(v,w):=(|k|+|\partial_v|)A_k\Theta^{\iota}_{k,\epsilon}(v,w)-(|k|+|\partial_v|)\Psi_0(w)A_k\Theta^{\iota}_{k,\epsilon}(v,w).
\end{equation}
For the simplicity of notations we suppressed the dependence of $H$ on $\iota,\epsilon,k$ in the above definition. By the support property of $\Psi_0$ and the inequality \eqref{P3.2} we have
\begin{equation}\label{P4}
\begin{split}
\left|\widetilde{\,H\,}(\xi,\eta)\right|&=(|k|+|\xi|)\left|\int_{\R}\widetilde{\,\,\Psi_0}(\zeta)\,\widetilde{\,\,\Theta^{\iota}_{k,\epsilon}}(\xi,\eta-\zeta)\Big[A_k(\eta)-A_k(\eta-\zeta)\Big]\,d\zeta\right|\\
&\lesssim \left|\int_{\R}e^{-0.5\delta_0\langle\zeta\rangle^{(1+s)/2}}\,
\langle\eta-\zeta\rangle^{s-1}(|k|+|\xi|)A_k(\eta-\zeta)\widetilde{\,\,\Theta^{\iota}_{k,\epsilon}}(\xi,\eta-\zeta)\,d\zeta\right|.
\end{split}
\end{equation}
Therefore,
\begin{equation}\label{C33}
\begin{split}
\left\|\widetilde{\,H\,}(\xi,\eta)\right\|_{L^2_{\xi}L^2_{\eta}}&\lesssim \left\|\langle\partial_w\rangle^{s-1}(|k|+|\partial_v|)A_k\Theta^{\iota}_{k,\epsilon}(v,w)\right\|_{L^2_vL^2_w}\\
                                                                               &\lesssim \beta \left\|(|k|+|\partial_v|)A_k\Theta^{\iota}_{k,\epsilon}(v,w)\right\|_{L^2_vL^2_w}+C_{\beta}\left\|(|k|+|\partial_v|)\Theta^{\iota}_{k,\epsilon}(v,w)\right\|_{L^2_vL^2_w},
\end{split}
\end{equation}
for any $\beta>0$. \eqref{F100} follows from \eqref{C33} and the definition \eqref{C32}, upon choosing a sufficiently small $\beta>0$.

\end{proof}

We now say a few words on how to remove the qualitative assumption that $$\|(|k|+|\partial_v|)A_k\widetilde{\,\,\Theta^{\iota}_{k,\epsilon}}\|_{L^2_vL^2_w}<\infty.$$ The argument is standard. One can for example follow the technique in the appendix of \cite{IOJI} and introduce for $\rho\gg1$,
\begin{equation}\label{P1}
h_{\rho}(r):=\left\{\begin{array}{ll}
sr^{s-1}-s\rho^{s-1}& {\rm if} \,\,r\in(0,\rho]\\
0&{\rm if}\,\,r\ge \rho,
\end{array}\right.
\end{equation}
\begin{equation}\label{P1.1}
\Pi_{\rho}(r):=\int_0^rh_{\rho}(x)\,dx,
\end{equation}
and define 
\begin{equation}\label{P2}
A_k^{\rho}(\eta):=e^{\lambda \Pi_{\rho}(\langle k,\eta\rangle))}.
\end{equation}
Clearly $A_k^{\rho}(\eta)$ is a bounded function (with a bound that depends on $\rho>1$), $A_k^{\rho}(\eta)\leq A_k(\eta)$ and $A_k^{\rho}(\eta)\uparrow A_k(\eta)$ as $\rho\to\infty$ for any $\eta\in\R$. The idea is to use $A_k^{\rho}(\eta)$ in the proof of \eqref{S2} and then send $\rho\to\infty$. We only need to use the following properties of $A_k^{\rho}$ instead of the point-wise inequalities \eqref{P3.2} and \eqref{F31.4}:

\begin{equation}\label{P3}
A_k^{\rho}(\alpha+\beta)\lesssim A_k^{\rho}(\alpha)\,e^{\lambda\langle\beta\rangle^s},
\end{equation}
\begin{equation}\label{P4}
\left|A_k^{\rho}(\alpha)-A^{\rho}_k(\beta)\right|\lesssim  \langle k,\alpha\rangle^{s-1} A_k^{\rho}(\alpha)\,e^{\lambda \langle \alpha-\beta\rangle^s}.
\end{equation}
for any $\alpha,\beta\in\R$, where the implied constants are independent of $\rho\gg1$. The elementary inequalities \eqref{P3}-\eqref{P4} can be proved from the definitions, using the fact that $h_{\rho}(r)\leq sr^{s-1}$ and \eqref{b>a}. We omit the routine details.

\section{Proof of the main theorem}\label{evo}

In this section we complete the proof of Theorem \ref{thm}.
\begin{proof}[Proof of Theorem \ref{thm}]
We first derive the properties of the stream function $\phi_k(t,v)$.
Set 
\begin{equation}\label{D1.1}
\Theta_k(v,w):=\lim_{\epsilon\to0+}\left[\Theta^{-}_{k,\epsilon}(v,w)-\Theta^{+}_{k,\epsilon}(v,w)\right].
\end{equation}
See Lemma 4.3 in \cite{JiaL} for the existence of the above limit. 

We claim that 
\begin{equation}\label{S4}
\left\|(|k|+|\xi|)e^{\lambda\langle k,\eta\rangle^s}\,\widetilde{\,\,\Theta_{k}}(\xi,\eta)\right\|_{L^2_vL^2_w}+\sup_{\xi,\eta\in\R}\left|(k^2+\xi^2)e^{\lambda\langle k,\eta\rangle^s}\,\widetilde{\,\,\Theta_{k}}(\xi,\eta)\right|\lesssim \left\|e^{\lambda\langle k,\eta\rangle^s}\,\widetilde{f_0^k}(\eta)\right\|_{L^2(\eta\in\R)},
\end{equation}
and denoting $\phi_k(t,v)$ as the $k-$th Fourier coefficient in $z$ of $\phi(t,z,v)$,
\begin{equation}\label{D3}
\widetilde{\,\,\Psi \phi_k}(t,\xi)=C\widetilde{\,\,\Theta_k}(\xi-kt,\xi).
\end{equation}
\eqref{U3} follows from \eqref{S4} and \eqref{D3}. \eqref{S4} follows from \eqref{S2} and \eqref{S3}. 

To prove \eqref{D3}, we recall the change of variables \eqref{F9} and Lemma \ref{H1}.

By \eqref{F3.4}, using the change of variable $v=b(y), w=b(y_0)$, and in view of \eqref{F10.4} and \eqref{F17.1}, we obtain
\begin{equation}\label{D2}
\begin{split}
&\Psi(v)\phi_k(t,v)e^{-iktv}=-\frac{1}{2\pi i}\lim_{\epsilon\to0+}\int_{\R}e^{-ikwt}\Psi(v)\left[\phi^{-}_{k,\epsilon}(v,w)-\phi^{+}_{k,\epsilon}(v,w)\right]\Psi(w)\,dw\\
                &=-\frac{1}{2\pi i}\lim_{\epsilon\to0+}\int_{\R}e^{-ikwt}\left[\Theta^{-}_{k,\epsilon}(v-w,w)-\Theta^{+}_{k,\epsilon}(v-w,w)\right]\,dw\\
                &=-\frac{1}{2\pi i}\int_{\R}e^{-ikwt}\Theta_k(v-w,w)\,dw=C\int_{\R^3}e^{-ikwt}\,\widetilde{\,\,\Theta_k}(\alpha,\beta)e^{i\alpha (v-w)+i\beta w}\,d\alpha d\beta dw\\
                &=C\int_{\R}\widetilde{\,\,\Theta_k}(\alpha,\alpha+kt)e^{i\alpha v}\,d\alpha.
\end{split}
\end{equation}
Hence 
\begin{equation}
\widetilde{\,\,\Psi \phi_k}(t,\xi)=C\widetilde{\,\,\Theta_k}(\xi-kt,\xi),
\end{equation}
which is exactly \eqref{D3}. 

We now turn to the property of $f$. Taking the Fourier transform of $f$ in $z$ and denoting the Fourier coefficients as $f_k(t,v)$, in view of the equation \eqref{F3.0} and the definitions \eqref{U1}-\eqref{U2}, we obtain that
\begin{equation}\label{D5}
\omega_k(t,y)=f_k(t,v(y))\,e^{-iktv(y)},
\end{equation}
and 
\begin{equation}\label{D6}
\partial_tf_k=ikB(v)\partial_vB(v)\phi_k(t,v),\qquad {\rm for}\,\,v\in[\lv,\uv].
\end{equation}
We notice that $\Psi\equiv1$ on the support of $\partial_vB$. Therefore,
\begin{equation}\label{D7}
f_k(t,v)-f_0^k(v)=ik\int_0^tB(v)\partial_vB(v)\Psi(v)\phi_k(\tau,v)\,d\tau.
\end{equation}
Using \eqref{D3}, we obtain
\begin{equation}\label{D8}
\begin{split}
\widetilde{\,f\,\,}(t,k,\xi)-\widetilde{\,f_0}(k,\xi)&=Cik\int_0^t\int_{\R} \widetilde{[B\partial_vB]}(\xi-\zeta) \widetilde{\,\,\Theta_k}(\zeta-k\tau,\zeta)\,d\tau d\zeta.
\end{split}
\end{equation}
Thus,
\begin{equation}\label{D9}
\begin{split}
A_k(\xi)\big|\widetilde{\,f\,\,}(t,k,\xi)-\widetilde{\,f_0}(k,\xi)\big|&\lesssim |k|\int_0^{\infty}\int_{\R}e^{-0.5\delta_0\langle\xi-\zeta\rangle^{(1+s)/2}} A_k(\zeta) \left|\widetilde{\,\,\Theta_k}(\zeta-k\tau,\zeta)\right|\,d\tau d\zeta\\
&\lesssim \int_{\R}\int_{\R}e^{-0.5\delta_0\langle\xi-\zeta\rangle^{(1+s)/2}} A_k(\zeta) \left|\widetilde{\,\,\Theta_k}(\gamma,\zeta)\right|\,d\gamma d\zeta\\
&\lesssim \left\{\int_{\R}\int_{\R}e^{-0.5\delta_0\langle\xi-\zeta\rangle^{(1+s)/2}} A^2_k(\zeta)\,(|k|+|\gamma|)^2 \left|\widetilde{\,\,\Theta_k}(\gamma,\zeta)\right|^2\,d\gamma d\zeta\right\}^{1/2}.
\end{split}
\end{equation}
\eqref{U4} follows from \eqref{D9}. 

To prove \eqref{U5}, we notice that the existence of $\lim_{t\to\infty}f(t)$ is clear, in view of the calculations \eqref{D8}-\eqref{D9} and similarly, we have for any $t'>t>0$
\begin{equation}\label{D10}
\begin{split}
\big|\widetilde{\,f\,\,}(t,k,\xi)-\widetilde{\,f\,\,}(t',k,\xi)\big|&\lesssim |k|\int_t^{t'}\int_{\R}e^{-0.5\langle\xi-\zeta\rangle^{(1+s)/2}}  \left|\widetilde{\,\,\Theta_k}(\zeta-k\tau,\zeta)\right|\,d\tau d\zeta\\
&\lesssim \int_{t}^{t'}\int_{\R}e^{-0.5\delta_0\langle\xi-\zeta\rangle^{(1+s)/2}} \frac{|k|}{k^2+(\zeta-k\tau)^2}e^{-\lambda\langle k,\zeta\rangle^s}\,d\tau d\zeta,
\end{split}
\end{equation}
from which \eqref{U5} follows. Theorem \ref{thm} is then proved.
\end{proof}

\appendix
\section{Gevrey spaces and Gevrey bounds on the Green's function}\label{appendix}

\subsection{Gevrey spaces}\label{appendix}

We review first some general properties of the Gevrey spaces of functions. 

We start with a characterization of the Gevrey spaces on the physical side. See Lemma A2 in \cite{IOJI} for the elementary proof.

\begin{lemma}\label{lm:Gevrey}
(i) Suppose that $0<s<1$, $K>1$, and $g\in C^{\infty}(\mathbb{T}\times \mathbb{R})$ with ${\rm supp}\,g\subseteq \mathbb{T}\times[-L,L]$ satisfies the bounds
\begin{equation}\label{growth}
\big|D^{\alpha}g(x)\big|\leq K^{m}(m+1)^{m/s},
\end{equation}
for all integers $m\ge 0$ and multi-indeces $\alpha$ with $|\alpha|=m$. Then
\begin{equation}\label{gevreyP}
\big|\widetilde{g}(k,\xi)\big|\lesssim_{K,s} Le^{-\mu|k,\xi|^s},
\end{equation}
for all $k\in\mathbb{Z}, \xi\in \R$ and some $\mu=\mu(K,s)>0$.

(ii) Conversely, assume that $\mu>0$, $s\in(0,1)$, and $g:\T\times\R\to\mathbb{C}$ satisfies
\begin{equation}\label{eq:fouP}
\big\|g\big\|_{\mathcal{G}^{\mu,s}(\mathbb{T}\times \mathbb{R})}\leq 1.
\end{equation}
Then there is $K=K(s,\mu)>1$ such that, for any $m\geq 0$ and all multi-indices $\alpha$ with $|\alpha|\leq m$,
\begin{equation}\label{eq:four}
\left|D^{\alpha}g(x)\right|\lesssim_{\mu,s} K^m(m+1)^{m/s}.
\end{equation}
\end{lemma}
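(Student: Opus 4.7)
The plan is to establish both directions via the standard Paley--Wiener-type correspondence in the Gevrey setting, exchanging pointwise derivative bounds with Fourier-side exponential decay through an explicit optimization in the number of derivatives/frequencies.

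For part (i), first I would use the compact support of $g$ together with integration by parts to write, for any multi-index $\alpha$ with $|\alpha|=m$,
\[
(ik)^{\alpha_1}(i\xi)^{\alpha_2}\,\widetilde{g}(k,\xi) = \int_{\T\times\R} e^{-ikx-i\xi y}\,D^\alpha g(x,y)\,dx\,dy,
\]
so that the hypothesis \eqref{growth} gives $|(k,\xi)|^m\,|\widetilde{g}(k,\xi)| \lesssim L\,K^m(m+1)^{m/s}$. I would then minimize the resulting bound
\[
|\widetilde{g}(k,\xi)| \lesssim L\inf_{m\ge 0}\frac{K^m(m+1)^{m/s}}{|k,\xi|^m}
\]
over $m\in\mathbb{N}$: for fixed $R:=|k,\xi|$ large, the choice $m\approx c_0 R^s$ with $c_0=c_0(K,s)>0$ small enough makes $K(m+1)^{1/s}/R$ comparable to a fixed constant less than $1$, so the infimum is bounded by $L\,e^{-\mu R^s}$ for some $\mu=\mu(K,s)>0$. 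For small $R$ the bound is trivial (enlarging the implicit constant). This yields \eqref{gevreyP}.

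For part (ii), I would interpret the Gevrey norm assumption \eqref{eq:fouP} in the $L^2$-based form
\[
\sum_{k\in\Z}\int_\R e^{2\mu|k,\xi|^s}\,|\widetilde{g}(k,\xi)|^2\,d\xi \leq 1,
\]
consistent with the conventions in \eqref{B1} and \eqref{B2}. Applying Fourier inversion together with Cauchy--Schwarz gives
\[
|D^\alpha g(x,y)| \leq \sum_k\int_\R |k,\xi|^m\,|\widetilde{g}(k,\xi)|\,d\xi \leq \Bigl\|\,|k,\xi|^m e^{-\mu|k,\xi|^s}\Bigr\|_{L^2}\cdot \Bigl\|e^{\mu|k,\xi|^s}\widetilde{g}\Bigr\|_{L^2},
\]
where the second factor is $\le 1$ by hypothesis. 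The first factor is controlled by pulling out $\sup_{R>0} R^m e^{-\mu R^s/2}$, attained at $R^s\approx 2m/(\mu s)$ and of size $(2m/(\mu s))^{m/s}e^{-m}$, leaving an integrable Gaussian-type tail $e^{-\mu|k,\xi|^s/2}$ whose $L^2$ norm is a finite constant depending only on $\mu, s$. This produces the desired bound $|D^\alpha g|\lesssim K^m(m+1)^{m/s}$ with $K=K(\mu,s)$, which is \eqref{eq:four}.

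The main technical point, common to both directions, is the scaling relation $m\sim R^s$ that interchanges the algebraic factor $(m+1)^{m/s}$ with the sub-Gaussian decay factor $e^{\pm\mu R^s}$; this is precisely what forces the exponent $s$ on both sides of the correspondence. The computation is elementary (a Stirling/calculus optimization), and is carried out in detail in Lemma A2 of \cite{IOJI}, to which the proof can be directly reduced.
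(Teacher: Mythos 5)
Your proposal is correct and coincides with the standard Paley--Wiener-type argument the paper invokes (the paper does not spell out the proof but refers to Lemma A2 of \cite{IOJI}, which is exactly this calculus optimization). Two small points worth tightening in a final write-up: in part (i) you should choose $\alpha=(m,0)$ or $(0,m)$ according to whether $|k|\ge|\xi|$ or not, so that $|k|^{\alpha_1}|\xi|^{\alpha_2}$ is comparable to $|k,\xi|^m$ up to a harmless $2^m$ factor; and in part (ii) the optimum of $R^m e^{-\mu R^s/2}$ at $R^s=2m/(\mu s)$ yields the factor $e^{-m/s}$ rather than $e^{-m}$, which is actually stronger and only shifts the resulting constant $K(\mu,s)$.
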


The physical space characterization of Gevrey functions is useful when studying compositions and algebraic operations of functions. For any domain $D\subseteq\T\times\R$ (or $D\subseteq\R$) and parameters $s\in(0,1)$ and $M\geq 1$ we define the spaces
\begin{equation}\label{Gevr2}
\widetilde{\mathcal{G}}^{s}_M(D):=\big\{g:D\to\mathbb{C}:\,\|g\|_{\widetilde{\mathcal{G}}^{s}_M(D)}:=\sup_{x\in D,\,m\geq 0,\,|\alpha|\leq m}|D^\alpha g(x)|M^{-m}(m+1)^{-m/s}<\infty\big\}.
\end{equation}

\begin{lemma}\label{GPF} (i) Assume  $s\in(0,1)$, $M\geq 1$, and $g_1,g_2\in \widetilde{\mathcal{G}}^{s}_M(D)$. Then $g_1g_2\in\widetilde{\mathcal{G}}^{s}_{M'}(D)$ and
\begin{equation*}
\|g_1g_2\|_{\widetilde{\mathcal{G}}^{s}_{M'}(D)}\lesssim \|g_1\|_{\widetilde{\mathcal{G}}^{s}_{M}(D)}\|g_2\|_{\widetilde{\mathcal{G}}^{s}_{M}(D)}
\end{equation*}
for some $M'=M'(s,M)\geq M$. Similarly, if $g_1\geq 1$ in $D$ then $\|(1/g_1)\|_{\widetilde{\mathcal{G}}^{s}_{M'}(D)}\lesssim 1$.

(ii) Suppose $s\in(0,1)$, $M\geq 1$, $I_1\subseteq \R$ is an interval, and $g:\mathbb{T}\times I_1\to \mathbb{T}\times I_2$ satisfies
\begin{equation}\label{gbo1}
|D^\alpha g(x)|\leq M^m(m+1)^{m/s}\qquad \text{ for any }x\in\T\times I_1,\,m\geq 1,\text{ and }|\alpha|\in [1,m].
\end{equation}
If $K\geq 1$ and $h\in \widetilde{G}^s_K(\T\times I_2)$ then $h\circ g\in \widetilde{G}^s_L(\T\times I_1)$ for some $L=L(s,K,M)\geq 1$ and
\begin{equation}\label{Ffgc}
\left\|h\circ g\right\|_{\widetilde{G}^s_L(\T\times I_1)}\lesssim_{s,K,M} \left\|h\right\|_{\widetilde{G}^s_K(\T\times I_2)}.
\end{equation}

(iii) Assume $s\in(0,1)$, $L\in[1,\infty)$, $I,J\subseteq\mathbb{R}$ are open intervals, and $g:I\to J$ is a smooth bijective map satisfying, for any $m\geq 1$,
\begin{equation}\label{gbo2}
|D^\alpha g(x)|\leq L^m(m+1)^{m/s}\qquad \text{ for any }x\in I\text{ and }|\alpha|\in [1,m].
\end{equation}
If $|g'(x)|\geq \rho>0$ for any $x\in I$ then the inverse function $g^{-1}:J\to I$ satisfies the bounds
\begin{equation}\label{gbo2.1}
|D^\alpha (g^{-1})(x)|\leq M^m(m+1)^{m/s}\qquad \text{ for any }x\in J\text{ and }|\alpha|\in [1,m],
\end{equation}
for some constant $M=M(s,L,\rho)\geq L$.
\end{lemma}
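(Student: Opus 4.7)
All three parts reduce to combinatorial manipulations of the physical-space growth condition defining $\widetilde{\mathcal{G}}^s_M(D)$, with the common underlying fact that $1/s>1$ for $s\in(0,1)$ lets the exponent factor $(m+1)^{m/s}$ absorb factorial-type overheads. I would prove the parts in order.

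For part (i), apply the Leibniz rule
\[
D^\alpha(g_1g_2)=\sum_{\beta\le\alpha}\binom{\alpha}{\beta}D^\beta g_1\,D^{\alpha-\beta}g_2
\]
and substitute the defining bounds. The elementary inequality $(j+1)^{j/s}(\ell+1)^{\ell/s}\le(m+1)^{m/s}$ for $j+\ell=m$, verified by comparing logarithms, together with the factor at most $2^m$ from summing binomial coefficients, closes the product bound with $M':=C(s)\,M$. For the reciprocal, strong induction on $|\alpha|$ using the recursion
\[
D^\alpha(1/g_1)=-g_1^{-1}\sum_{0<\beta\le\alpha}\binom{\alpha}{\beta}D^\beta g_1\cdot D^{\alpha-\beta}(1/g_1),
\]
derived from $g_1\cdot(1/g_1)=1$ and the assumption $g_1\ge 1$, closes the estimate using the same elementary inequality.

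For part (ii), the engine is Fa\`a di Bruno's formula in its partial-Bell-polynomial form
\[
D^\alpha(h\circ g)=\sum_{k=1}^{m}h^{(k)}(g)\cdot B_{m,k}\bigl(\partial g,\partial^2 g,\ldots,\partial^{m-k+1}g\bigr),
\]
where $B_{m,k}$ sums monomials weighted by explicit multinomial coefficients $\frac{m!}{\prod j_i!\,(i!)^{j_i}}$ over index tuples $(j_i)$ with $\sum j_i=k$ and $\sum i\,j_i=m$. Substituting $|\partial^i g|\le M^i(i+1)^{i/s}$ and $|h^{(k)}|\le K^k(k+1)^{k/s}$ and using $\prod(i!)^{j_i}\le m!$, the Gevrey bound $(m+1)^{m/s}\sim(m!)^{1/s}$ (up to a $C^m$ factor) lets the exponent $1/s-1>0$ absorb the remaining factorial ratio. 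Summing over $k$ contributes only another $C^m$, yielding the claim with $L=L(s,K,M)$. This step is where the Gevrey regime $s<1$ is essential: in the analytic case $s=1$ the absorption would fail.

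Part (iii) follows the same template applied to $g(g^{-1}(y))=y$. Differentiating $m$ times and isolating the top-order contribution gives the recursion
\[
D^m g^{-1}=-\frac{1}{g'(g^{-1})}\sum_{k\ge 2}g^{(k)}(g^{-1})\cdot B_{m,k}(Dg^{-1},\ldots,D^{m-k+1}g^{-1}),
\]
in which every $D^jg^{-1}$ on the right-hand side has $j<m$. Using $|g'(g^{-1})|^{-1}\le 1/\rho$, the Gevrey bound on $g$, and the combinatorial estimates from part (ii), I would close $|D^m g^{-1}|\le M^m(m+1)^{m/s}$ by strong induction on $m$, with $M=M(s,L,\rho)$ chosen large enough that the inductive step survives both the $1/\rho$ factor and the Bell-polynomial combinatorial constant. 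The main obstacle I anticipate is precisely this propagation of a uniform $M$ through the induction: the hypothesis must be strong enough to feed itself at the next order, and the balancing of $1/\rho$ against the combinatorial constants is the one place where the quantitative nondegeneracy $|g'|\ge\rho$ enters.
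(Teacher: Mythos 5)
Your proposal follows the same elementary route the paper gestures at --- the paper supplies no proof of this lemma, stating only that it ``can be proved by elementary means using just the definition'' and citing Yamanaka --- so your Leibniz/Fa\`a di Bruno strategy is essentially what the paper has in mind, and parts (i) and (iii) are correctly outlined. In part (ii), however, the combinatorial closing step is underspecified in a way that, read literally, does not yield a proof. Fa\`a di Bruno sums not merely over $k$ but over all index tuples $(j_i)$ with $\sum_i j_i=k$ and $\sum_i i\,j_i=m$, equivalently over all set partitions of $\{1,\dots,m\}$, and the raw count of such terms is the Bell number $B_m$, which is superexponential in $m$; so ``summing over $k$ contributes only another $C^m$'' does not address the real combinatorial obstruction. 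The estimate closes because the weighted factorial $k!\prod_i(i!)^{j_i}$ attached to each partition obeys $k!\prod_i(i!)^{j_i}\leq m!$ (a consequence of the multinomial coefficient $\binom{m}{a_1,\dots,a_k}\geq k!$ for $a_r\geq 1$): raising this quantity to the power $1/s$ and writing it as $(m!)^{1/s-1}$ times the first power, one peels off $(m!)^{1/s-1}$ using the slack from $s<1$, and then controls the residual partition sum $\sum_{\pi}K^{|\pi|}\,|\pi|!\prod_{B\in\pi}|B|!\lesssim (CK)^m\,m!$ by a generating-function or majorant argument. That two-step absorption is the content concealed in your single phrase, and it is what you would need to spell out. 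A further small correction: the assertion that ``in the analytic case $s=1$ the absorption would fail'' is misleading, since composition of analytic functions is analytic and the lemma does hold at $s=1$; what fails there is only this particular crude bound, and the $s=1$ case is instead handled by the sharper Cauchy-majorant argument rather than by the $1/s-1>0$ slack.
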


Lemma \ref{GPF} 
can be proved by elementary means using just the definition \eqref{Gevr2}. See also Theorem 6.1 and Theorem 3.2 of \cite{Yamanaka} for more general estimates on functions in Gevrey spaces.

\subsubsection{Gevrey cutoff functions}\label{GevCut} Using Lemma \ref{lm:Gevrey}, one can construct explicit cutoff functions in Gevrey spaces. For $a>0$ let
\begin{equation}\label{gev1}
\psi_a(x):=\begin{cases}
e^{-[1/x^a+1/(1-x)^a]}&\quad\text{ if }x\in[0,1],\\
0&\quad\text{ if }x\notin[0,1].
\end{cases}
\end{equation}
Clearly $\psi_a$ are smooth functions on $\R$, supported in the interval $[0,1]$ and independent of the periodic variable. It is easy to verify that $\psi_a$ satisfies the bounds \eqref{growth} for $s:=a/(a+1)$. Thus
\begin{equation}\label{gev2}
|\widetilde{\psi_a}(\xi)|\lesssim e^{-\mu|\xi|^{a/(a+1)}}\qquad\text{ for some }\mu=\mu(a)>0.
\end{equation} 

One can also construct compactly supported Gevrey cutoff functions which are equal to $1$ in a given interval. Indeed, for any $\rho\in[9/10,1)$, the function
\begin{equation}\label{gev3}
\psi'_{a,\rho}(x):=\frac{\psi_a(x)}{\psi_a(x)+\psi_a(x-\rho)+\psi_a(x+\rho)}
\end{equation}
is smooth, non-negative, supported in $[0,1]$, and equal to $1$ in $[1-\rho,\rho]$. Moreover, it follows from Lemma \ref{lm:Gevrey} (i) that $|\widetilde{\psi'_{a,\rho}}(\xi)|\lesssim e^{-\mu|\xi|^{a/(a+1)}}$ for some $\mu=\mu(a,\rho)>0$.

\subsection{Gevrey bounds on the localized Green's function}\label{Gr}
We now provide the proof of Lemma \ref{F18} (which we recall below).
\begin{lemma}\label{F18'}
Define the localized Green's function $\mathcal{G}_k$ as
\begin{equation}\label{F19'}
\mathcal{G}^{\ast}_k(v,w):=\Psi(v)\mathcal{G}_k(v,w)\Psi(w),\qquad {\rm for}\,\,v,w\in\R. 
\end{equation}
Then for some $\delta_0:=\delta_0(\vartheta_1)>0$, we have the bounds
\begin{equation}\label{F20'}
\big|\widetilde{\mathcal{G}_k^{\ast}}(\xi,\eta)\big|\lesssim \frac{e^{-\delta_0\langle\xi+\eta\rangle^{(s+1)/2}}}{k^2+|\eta|^2},\qquad{\rm for}\,\,\xi,\eta\in\R.
\end{equation}
\end{lemma}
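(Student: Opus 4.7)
The plan is to reduce the estimate to an analogous bound in the original $(y,z)$-variables, where $G_k$ admits an explicit formula, and then transfer it to the $(v,w)$-Fourier side via the Gevrey change of variables $b$. Since $\mathcal{G}_k^*(v,w)=\varphi(y)\varphi(z)G_k(y,z)\big|_{y=b^{-1}(v),\,z=b^{-1}(w)}$, the first step would be to prove
\[
\big|\mathcal{F}_{y,z}[\varphi(y)\varphi(z)G_k(y,z)](\xi',\eta')\big|\lesssim \frac{e^{-\delta_1\langle\xi'+\eta'\rangle^{(1+s)/2}}}{k^2+(\eta')^2}.
\]

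Using the identity $2\sinh(ka)\sinh(kb)=\cosh(k(a+b))-\cosh(k(a-b))$ in \eqref{eq:GreenFunction}, for $k>0$ one has
\[
G_k(y,z)=\frac{e^{-k|y-z|}-e^{-k(2-|y-z|)}-e^{-k(y+z)}+e^{-k(2-y-z)}}{2k(1-e^{-2k})}.
\]
On $\mathrm{supp}(\varphi\otimes\varphi)\subseteq[\vartheta_1/3,1-\vartheta_1/3]^2$ the last three terms are of size $e^{-c(\vartheta_1)|k|}$ (including derivative bounds) and contribute negligibly to the required bound. For the main translation-invariant term, the identity $\frac{e^{-|k||y-z|}}{2|k|}=\frac{1}{2\pi}\int_\R\frac{e^{i\tau(y-z)}}{k^2+\tau^2}\,d\tau$ gives
\[
\mathcal{F}_{y,z}\!\Big[\varphi(y)\varphi(z)\tfrac{e^{-|k||y-z|}}{2|k|}\Big](\xi',\eta')=\frac{1}{2\pi}\int_\R\frac{\hat\varphi(\xi'-\tau)\,\hat\varphi(\eta'+\tau)}{k^2+\tau^2}\,d\tau.
\]
The Gevrey-$(1+s)/2$ decay $|\hat\varphi(\zeta)|\lesssim e^{-\mu\langle\zeta\rangle^{(1+s)/2}}$ (from the assumption on $\varphi$ in Theorem \ref{thm} together with Lemma \ref{lm:Gevrey}) and the subadditivity $\langle\xi'+\eta'\rangle^{(1+s)/2}\leq \langle\xi'-\tau\rangle^{(1+s)/2}+\langle\eta'+\tau\rangle^{(1+s)/2}$ extract a factor $e^{-\delta_1\langle\xi'+\eta'\rangle^{(1+s)/2}}$; the elementary bound $k^2+(\eta')^2\lesssim (k^2+\tau^2)\langle\tau-\eta'\rangle^2$ then produces $(k^2+(\eta')^2)^{-1}$, with the polynomial factor absorbed by the remaining Gevrey decay in $\tau$.

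For the transfer to $(v,w)$-Fourier, write
\[
\widetilde{\mathcal{G}_k^*}(\xi,\eta)=\int\varphi(y)\varphi(z)G_k(y,z)\,b'(y)b'(z)\,e^{-ib(y)\xi-ib(z)\eta}\,dy\,dz,
\]
and insert the Fourier representation of $e^{-|k||y-z|}$ in the main term of $G_k$ (the boundary terms being negligible as before) to decouple the integral as
\[
\frac{1}{2\pi(1-e^{-2|k|})}\int_\R\frac{Q_\tau(\xi)\,Q_{-\tau}(\eta)}{k^2+\tau^2}\,d\tau,\qquad Q_\tau(\xi):=\int\varphi(y)b'(y)e^{iy\tau-ib(y)\xi}\,dy.
\]
By \eqref{B1} and Lemma \ref{GPF}, $b$ is a Gevrey-$(1+s)/2$ diffeomorphism and $b^{-1}$ is of the same class, so $\varphi(y)b'(y)e^{-ib(y)\xi}$ is Gevrey-regular in $y$, and the stationary-phase condition $\tau=-b'(y_*)\xi$ gives a concentration bound of the form $|Q_\tau(\xi)|\lesssim\int_{\alpha\in[\inf b',\sup b']}e^{-\mu''\langle\tau+\alpha\xi\rangle^{(1+s)/2}}\,d\alpha$. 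Combined with the same subadditivity and elliptic-factor arguments as above, and using that $B$ is bounded above and below to pass $(k^2+\tau^2)^{-1}$ to $(k^2+\eta^2)^{-1}$, this yields the claim. \emph{The main obstacle} is precisely this transfer step: carefully tracking the Gevrey constants through the nonlinear phase $b(y)\xi$ while preserving the clean elliptic structure $(k^2+\eta^2)^{-1}$, for which the product, composition, and inverse Gevrey estimates collected in Lemma \ref{GPF} are essential.
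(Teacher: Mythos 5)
Your first step—proving the analogous bound for $\mathcal{F}_{y,z}[\varphi(y)\varphi(z)G_k(y,z)]$ via the Poisson-kernel factorization $\frac{e^{-|k||y-z|}}{2|k|}=\frac{1}{2\pi}\int_\R\frac{e^{i\tau(y-z)}}{k^2+\tau^2}\,d\tau$, convolution, and subadditivity—is correct and clean. But the genuine gap lies exactly where you flag it, in the transfer to $(v,w)$-Fourier, and I do not think your route closes.

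The claimed concentration bound $|Q_\tau(\xi)|\lesssim\int_{\alpha}e^{-\mu''\langle\tau+\alpha\xi\rangle^{(1+s)/2}}\,d\alpha$ is false: at a nondegenerate stationary point $\tau=b'(y_*)\xi$, stationary phase gives $|Q_\tau(\xi)|\sim|\xi|^{-1/2}$ (or even $O(1)$ on a linear piece of $b$, since then $Q_\tau(\xi)=\widehat{\Psi}(\xi-c\tau)$), whereas the right-hand side is only $O(|\xi|^{-1})$. More importantly, even if one replaces it with a correct bound (say $|Q_\tau(\xi)|\lesssim e^{-\mu\min_y\langle\tau-b'(y)\xi\rangle^{(1+s)/2}}$), the product $Q_\tau(\xi)Q_{-\tau}(\eta)$ does not decay in $\langle\xi+\eta\rangle$: subadditivity gives a gain in $\langle\tau-\alpha\xi\rangle+\langle\tau+\beta\eta\rangle\ge\langle\alpha\xi+\beta\eta\rangle$ for the respective minimizers $\alpha,\beta$ of $b'$, and when $b$ is nonlinear ($\alpha\ne\beta$) the quantity $\alpha\xi+\beta\eta$ vanishes on a cone in $(\xi,\eta)$ on which $\xi+\eta$ can be as large as a constant times $|\xi|\sim|\eta|\sim|\tau|$. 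In that regime the $\tau$-integral only delivers polynomial, not Gevrey, decay in $\xi+\eta$. The decoupled Fourier representation loses the diagonal structure in the $(v,w)$-variables, and "tracking the Gevrey constants through the nonlinear phase" cannot recover it.

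The paper instead exploits the substitution $w\mapsto v+w$ before taking the Fourier transform: after this shift, $\Psi(v)\mathcal{G}_k(v,v+w)\Psi(v+w)$ is genuinely Gevrey regular in $v$, uniformly in $w$ and $k$, because $b^{-1}(v)-b^{-1}(v+w)=-wF(v,v+w)$ with $F$ a positive Gevrey function, so the absolute value no longer produces a $v$-singularity; the $\xi+\eta$ frequency is then conjugate to the regular variable $v$, and two integrations by parts in $w$ (using the exponential decay $e^{-\mu|kw|}$ and the jump at $w=0$) produce the factor $(k^2+\eta^2)^{-1}$. The key idea you are missing is precisely this shift, which transfers the Gevrey structure to the $\xi+\eta$ variable without decoupling the Green's function.
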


\begin{proof}
In view of the definitions \eqref{eq:GreenFunction}, for $y,z\in[0,1]$
\begin{equation}\label{F19.1}
G_k(y,z)=\frac{1}{4|k|\sinh{|k|}}\left[e^{|k|}e^{-|k||y-z|}+e^{-|k|}e^{|k||y-z|}-e^{|k|}e^{-|k|(y+z)}-e^{-|k|}e^{|k|(y+z)}\right].
\end{equation}
Therefore, by direct computation, we conclude that for any $\delta>0$ the following bounds hold:
\begin{equation}\label{F19.2}
\left|\partial_y^m\partial_z^lh_k(y,z)\right|\lesssim \delta^{-m-l}\big[(m+l)!\big]e^{-(\delta/20)|k|},\qquad{\rm for}\,\,k,l\in\mathbb{Z}\cap [0,\infty),
\end{equation}
where either $h_k\in\big\{e^{-|k||y-z|}, e^{-2|k|}e^{|k||y-z|}\big\}$ and $y,z\in[0,1], |y-z|>\delta$; or  $h_k\in\big\{e^{-|k|(y+z)}$, $e^{-2|k|}e^{|k|(y+z)}\big\}$ and $y,z\in[\delta,1-\delta]$; or $h_k=|k|G_k$ and $y,z\in[\delta,1-\delta]$, $|y-z|>\delta$.
 
Notice that
\begin{equation}\label{F21}
b^{-1}(v)-b^{-1}(w)=(v-w)\int_0^1\big(b^{-1}\big)'(w+s(v-w))\,ds.
\end{equation}
Denote
\begin{equation}\label{F22}
F(v,w):=\int_0^1\big(b^{-1}\big)'(w+s(v-w))\,ds,\qquad F^{\ast}(v,w):=b^{-1}(v)+b^{-1}(w).
\end{equation}
Then by \eqref{B}-\eqref{B1}, the physical space characterization of Gevrey spaces, see Lemma \ref{lm:Gevrey} and Lemma \ref{GPF}, $g\in\{F,F^{\ast}\}$ satisfies for $v,w\in[\lv,\uv]$
\begin{equation}\label{F22.1}
\left|\partial_v^m\partial_w^lg(v,w)\right|\lesssim C^{m+l}((m+l)!)^{2/(s+1)},\qquad {\rm for}\,\,m,l\in\mathbb{Z}\cap[0,\infty).
\end{equation}
In view of the change of variables \eqref{F15} and the definitions \eqref{F22}, we can write for $v,w\in[\lv,\uv]$
\begin{equation}\label{F20.0}
\begin{split}
\mathcal{G}_k(v,w)&=\frac{1}{4|k|\sinh{|k|}}\Big[e^{-|k|}e^{|k||b^{-1}(v)-b^{-1}(w)|}+e^{|k|}e^{-|k||b^{-1}(v)-b^{-1}(w)|}\\
                              &\hspace{1in}-e^{|k|}e^{-|k|(b^{-1}(v)+b^{-1}(w))}-e^{-|k|}e^{|k|(b^{-1}(v)+b^{-1}(w))}\Big]\\
                              =\frac{1}{4|k|\sinh{|k|}}&\Big[e^{-|k|}e^{|k|F(v,w)|v-w|}+e^{|k|}e^{-|k|F(v,w)|v-w|}-e^{|k|}e^{-|k|F^{\ast}(v,w)}-e^{-|k|}e^{|k|F^{\ast}(v,w)}\Big].
\end{split}
\end{equation}
Therefore for any $v,w\in\R$,
\begin{equation}\label{F22.2}
\begin{split}
\Psi(v)\mathcal{G}_k(v,v+w)\Psi(v+w)=\frac{\Psi(v)\Psi(v+w)}{4|k|\sinh{|k|}}\Big[&e^{-|k|}e^{|k|F(v,v+w)|w|}+e^{|k|}e^{-|k|F(v,v+w)|w|}\\
&-e^{|k|}e^{-|k|F^{\ast}(v,v+w)}-e^{-|k|}e^{|k|F^{\ast}(v,v+w)}\Big].
\end{split}
\end{equation}
We claim the following bounds for $H_1=e^{-|k||w|F(v,v+w)}, H_2=e^{-2|k|+|k||w|F(v,v+w)}, $ and \\
$H_3\in\big\{-e^{-|k|F^{\ast}(v,v+w)},-e^{-2|k|}e^{|k|F^{\ast}(v,v+w)}\big\}$ with $(v,w)\in{\rm supp}\,\Psi(v)\Psi(v+w)$:
\begin{equation}\label{F23.1}
\left|\partial_v^lH_a(v,w)\right|\lesssim e^{-\mu|kw|}C^l(l!)^{2/(1+s)},\qquad {\rm for}\,\,l\in\mathbb{Z}\cap[0,\infty), a\in\{1,2,3\},
\end{equation}
where $\mu>0$ is a sufficiently small number (depending on $\vartheta_1$). 

The bounds \eqref{F23.1}  for $H_3$ follows from the bounds \eqref{F19.2} for the functions $e^{-|k|(y+z)}$ and $e^{|k|(y+z-2)}$ where $y,z\in[\delta,1-\delta]$ for a sufficiently small $\delta>0$, and the property of Gevrey regular functions under compositions, see Lemma \ref{GPF}. 

To prove the bounds \eqref{F23.1} for $H_1$ we consider separately the cases $|kw|<1$ and $|kw|>1$, and use \eqref{F19.1}-\eqref{F22.1}. More precisely, if $|kw|<1$, the bounds \eqref{F23.1} for $H_1$ follow direclty from  the property of Gevrey regular functions under composition, since $|kw|F(v,v+w)$ is Gevrey regular in $v$ with uniform bounds in $k$ and $w$ satisfying $|kw|<1$. For the case of $|kw|>1$, we first note the function $a\to e^{-\kappa a}$ is uniformly Gevrey regular with respect to $\kappa>1$ in $a\in[\delta,\infty)$ for any fixed $\delta>0$. More precisely, we have for $\kappa>1$,
\begin{equation}\label{F26}
\left|\partial_a^me^{-\kappa a}\right|\lesssim 4^m \delta^{-m}(m!) e^{-\kappa \delta/2},\qquad{\rm for \,\,}m\in\mathbb{Z}\cap [1,\infty).
\end{equation}
Now we view $H_1$ as the composition of $e^{-\kappa a}$ and $F(v,v+w)$ with $\kappa=|kw|$, notice from \eqref{F22} that $F(v,v+w)\approx_{\vartheta_1} 1$ on the support of $\Psi(v)\Psi(v+w)$. Then the bounds \eqref{F23.1} for $H_1$ follow from \eqref{F26} and the property of Gevrey spaces under compositions, see Lemma \ref{GPF}.

The bounds \eqref{F23.1} for $H_2$ follow from similar arguments as the case of $H_1$. The case $|kw|<1$ follows from the same argument so focus on the case $|kw|>1$. We view $H_2$ as the composition of the function $a\to e^{-|k|a}$ and $2-F(v,v+w)|w|$ and notice that $F(v,v+w)|w|<2-\delta$ on the support of $\Psi(v)\Psi(v+w)$ for a small $\delta$ depending on $\vartheta_1$, see \eqref{F21}. The bounds  \eqref{F23.1} for $H_2$ then follow from analogous arguments as in the case of $H_1$. This completes the proof of \eqref{F23.1}.

To finish the proof of Lemma \ref{F18'}, we make the observation that for $\xi,\eta\in\R$,
\begin{equation}\label{F23}
\begin{split}
\int_{\R^2}\Psi(v)\mathcal{G}_k(v,w)\Psi(w)e^{-i v\xi-iw\eta}dv dw=\int_{\R^2}\Psi(v)\mathcal{G}_k(v,v+w)\Psi(v+w)e^{-iv(\xi+\eta)-iw\eta}\,dvdw.
\end{split}
\end{equation}
The claimed bounds \eqref{F20'} follow from integration by parts in \eqref{F23} in the variable $w$ (twice) and then apply Lemma \ref{lm:Gevrey} in the variable $v$, using \eqref{F22.2}-\eqref{F23.1}.
\end{proof}


\begin{thebibliography}{9}
\bibitem{BeMa}J., Bedrossian, N. Masmoudi \emph{Inviscid damping and the asymptotic stability of planar shear flows in the 2D Euler equations}, Publ. Math. l'IHES,  122 (2015), 195-300.

\bibitem{Bed2}J. Bedrossian, M. Coti Zelati, and V. Vicol, \emph{Vortex axisymmetrization, inviscid damping, and vorticity depletion
in the linearized 2D Euler equations}, arXiv 1711.03668.

\bibitem{Bouchet}F. Bouchet and H. Morita, \emph{Large time behavior and asymptotic stability of the 2D Euler and linearized
Euler equations}, Physica D, 239(2010), 948-966






\bibitem{Case} K. Case, \emph{Stability of inviscid plane Couette flow}, Phys. Fluids, 3(1960), 143-148

\bibitem{Deng} Y. Deng and N. Masmoudi, \emph{Long time instability of the Couette flow in low Gevrey spaces}, preprint (2018), arXiv 1803.01246.

\bibitem{Faddeev}L. Faddeev, \emph{On the theory of the stability of plane-parallel flows of an ideal fluid}, Zapiski Nauchnykh Seminarov Leningradskogo Otdeleniya Matematicheskogo Instituta im. V. A. Steklova Akademii Nauk SSSR, Vol. 21, pp. 164-172, 1971


\bibitem{Grenier}E., Grenier,  T., Nguyen, F., Rousset, A., Soffer, \emph{Linear inviscid damping and enhanced viscous dissipation of shear flows by using the conjugate operator method},  arXiv:1804.08291

\bibitem{IOJI} A. Ionescu and H. Jia, \emph{inviscid damping near shear flows in a channel}, preprint arXiv:1808.04026

\bibitem{Jia3} A. Ionescu and H. Jia, \emph{On axi-symmetrization of point vortex solutions to the 2D Euler equations}, to appear

\bibitem{JiaL}H. Jia, \emph{Linear inviscid damping near monotone shear flows}, preprint, arXiv:1902.06849


\bibitem{Kelvin} L. Kelvin, \emph{Stability of fluid motion-rectilinear motion of viscous fluid between two plates}, Phi. Mag. 24 (1887), 155

\bibitem{Kirchhoff} G. Kirchhoff, \emph{Vorlesungen ber mathematische Physik}, Teubner, Leipzig 1876



 




\bibitem{ZhiWu} Z. Lin and C. Zeng, \emph{Inviscid dynamical structures near Couette flow}, Arch. Ration. Mech. Anal. {\bf{200}} (2011), 1075-1097.

\bibitem{Lin}Z. Lin, \emph{Instability of some ideal plane flows}, SIAM J. MATH. ANAL., Vol. 35, No. 2, pp 318-356

\bibitem{Villani} C. Mouhot and C. Villani, \emph{On Landau damping}, Acta Math. {\bf{207}} (2011), 29-201.

\bibitem{Orr}W., Orr, \emph{The stability or instability of steady motions of a perfect liquid and of a viscous liquid}, Part I: a perfect liquid, Proc. R. Ir. Acad., A Math. Phys. Sci., 27 (1907), 9-68


\bibitem{Rayleigh}L. Rayleigh, \emph{On the stability or instability of certain fluid motions}, Proc. Lond. Math. Soc., S1-11 (1880), 57

 \bibitem{Rosencrans} S. Rosencrans, D. Sattinger, \emph{On the spectrum of an operator occurring in the theory of Hydrodynamics stability}, J. Math. Phys., 45(1966), 289-300.


\bibitem{Stepin}S. Stepin, \emph{Nonself-adjoint Friedrichs Model in Hydrodynamic Stability}, Functional analysis and its applications, Vol. 29, No. 2, 1995, Translated from Funktsionaltnyi Analiz i Ego Prilozheniya, Vol. 29, No. 2, pp. 22-35, April- June, 1995. Original article submitted August 3, 1994.


\bibitem{Yamanaka} T. Yamanaka, \emph{A new higher order chain rule and Gevrey class}, Ann. Global Anal. Geom. {\bf{7}} (1989), 179-203.



\bibitem{dongyi}D. Wei, Z. Zhang, and W. Zhao, \emph{Linear Inviscid Damping for a Class of Monotone Shear Flow in Sobolev Spaces}, Comm. Pure Appl. Math. 71(2018), 617-687


\bibitem{Dongyi2} D. Wei, Z. Zhang and W. Zhao, \emph{Linear inviscid damping and vorticity depletion for shear flows},
arXiv:1704.00428.

\bibitem{Dongyi3} D. Wei, Z. Zhang and W. Zhao, \emph{Linear inviscid damping and enhanced dissipation for the Kolmogorov
flow}, arXiv:1711.01822.


\bibitem{Zillinger1} C. Zillinger, \emph{Linear inviscid damping for monotone shear flows},
Trans. Amer. Math. Soc. {\bf{369}} (2017), 8799-8855.

\bibitem{Zillinger2} C. Zillinger, \emph{Linear inviscid damping for monotone shear flows in a finite periodic channel, boundary effects, blow-up and critical Sobolev regularity},
Arch. Ration. Mech. Anal. {\bf{221}} (2016), 1449-1509.

\bibitem{Zillinger3}M. Coti Zelati and C. Zillinger, \emph{On degenerate circular and shear flows: the point vortex and power law circular flows}, arXiv:1801.07371 




\end{thebibliography}
\end{document}